\numberwithin{equation}{section}
\theoremstyle{plain}                    
\newtheorem{lem}{Lemma}[section]
\newtheorem{prop}{Proposition}[section]
\theoremstyle{definition}
\theoremstyle{remark}
\newtheorem{rem}{Remark}               
\def\N{{\mathbb N}}
\def\Z{{\mathbb Z}}
\def\R{{\mathbb R}}
\def\P{{\mathbb P}}
\def\E{{\mathbb E}}
\def\F{{\mathcal F}}
\def\1{{\mathbbm{1}}}
\def\wtl{{\widetilde{\lambda}}}
\def\wty{{\widetilde{Y}}}
\def\wtx{{\widetilde{X}}}
\def\wtz{{\widetilde{Z}}}
\def\ninfty{\mathop{\longrightarrow}\limits_{n\to\infty}}
\newcommand{\var}{\mathop{\rm var}\nolimits}
\newcommand{\cov}{\mathop{\rm cov}\nolimits}
\newcommand{\be}{\begin{equation}}
\newcommand{\bd}{\begin{displaymath}}
\newcommand{\ed}{\end{displaymath}}
\newcommand{\bea}{\begin{eqnarray}}
\newcommand{\eea}{\end{eqnarray}}
\newcommand{\bean}{\begin{eqnarray*}}
\newcommand{\eean}{\end{eqnarray*}}
\begin{document}

\thispagestyle{empty}

\baselineskip13pt

\begin{center}
{\large \sc Mixing properties of non-stationary INGARCH(1,1) processes}
\end{center}

\vspace*{1cm}

\begin{center}
Paul Doukhan\\
CY  University\\
UMR 8088 Analyse, G\'eom\'etrie et 
Mod\'elisation\\
2, avenue Adolphe Chauvin\\
95302 Cergy-Pontoise Cedex\\
France\\
E-mail: doukhan@cyu.fr\\[1cm]
Michael H.~Neumann\\
Friedrich-Schiller-Universit\"at Jena\\
Institut f\"ur Mathematik\\
Ernst-Abbe-Platz 2\\
D -- 07743 Jena\\
Germany\\
E-mail: michael.neumann@uni-jena.de\\[1cm]
Anne Leucht\\
Universit\"at Bamberg\\
Research group of Statistics and Mathematics\\
Feldkirchenstra\ss e 21\\
D -- 96052 Bamberg\\
Germany\\
E-mail: anne.leucht@uni-bamberg.de\\[1.5cm]
\end{center}

\vspace*{1.0cm}

\begin{center}
{\bf Abstract}\end{center}
We derive mixing properties for a broad class of Poisson count time series satisfying a certain
contraction condition. Using specific coupling techniques, we prove absolute regularity at a geometric rate
not only for stationary Poisson-GARCH processes but also for models with an explosive trend.
We provide easily verifiable sufficient conditions for absolute regularity for a variety of models
including classical (log-)linear models. 
Finally, we illustrate the practical use of our results for hypothesis testing.

\vspace*{1cm}

\footnoterule \noindent {\sl 2010 Mathematics Subject
Classification:} Primary 60G10; secondary 60J05. \\
{\sl Keywords and Phrases:} Absolute regularity, coupling, INGARCH, mixing. \\
{\sl Short title:} Mixing of non-stationary INGARCH processes. \vfill
\noindent
version: \today

\newpage

\setcounter{page}{1}
\pagestyle{headings}
\normalsize

\section{Introduction}
\label{S1}

Conditional heteroscedastic processes have become quite popular for modeling the evolution
of stock prices, exchange rates and interest rates. Starting with the seminal papers
by \citet{Eng82} on autoregressive conditional heteroscedastic models (ARCH)
and \citet{Bol86} on generalized ARCH, numerous variants of these models have been
proposed for modeling financial time series; see for example \citet{FZ10} for
a detailed overview.
More recently, integer-valued GARCH models (INGARCH) which mirror the structure 
of GARCH models have been proposed for modeling time series of counts;
see for example \citet{Fok12} and the recently edited volume by \citet{DHLR16}.

We consider integer-valued processes where the count variable~$Y_t$ at time~$t$, given the past,
has a Poisson distribution with intensity~$\lambda_t$. The intensity~$\lambda_t$ itself
is random and it is assumed that $\lambda_t=f_t(Y_{t-1},\lambda_{t-1},Z_{t-1})$, for some
function~$f_t$, i.e.~$\lambda_t$ is a function of lagged values of the count and intensity processes
and a covariate~$Z_{t-1}$.
Mixing properties
of such processes have been derived for a first time in \citet{Neu11}, for a time-homogeneous
transition mechanism with $\lambda_t=f(Y_{t-1},\lambda_{t-1})$.
This has been generalized by \citet{Neu21} to a GARCH structure of arbitrary order.
In both cases a contractive condition on the intensity function~$f$ was imposed which resulted
in an exponential decay of the coefficients of absolute regularity.
Under a weaker semi-contractive rather than a fully contractive condition on the intensity function,
\citet{DN19} also proved absolute regularity of the count process, this time with a slower
subexponential decay of the mixing coefficients.
In the present paper we extend these results in two directions. We include an exogeneous
covariate process in the intensity function and we also drop the condition of time-homogeneity.
This allows us to consider ``weakly non-stationary'' processes, e.g.~with a periodic pattern
in the intensity function. Moreover, we also allow for a certain explosive behavior which could 
e.g.~result from a deterministic trend. As shown in the text, this requires certain
modifications of the techniques used in our previous work.

In the next section, we state the precise conditions, describe our approach of deriving mixing
properties, and state the main results. In Section~\ref{S3} we apply these results
to time-homogeneous and time-inhomogeneous linear INGARCH models, to
the log-linear model proposed by \citet{FT11} as well as to mixed Poisson INGARCH models.
Section~\ref{S4} clarifies connections to previous work and sketches a few possible extensions. 
In Section~\ref{S5} we discuss a possible application of our results. All proofs are deferred to a final Section~\ref{S6}.
 
\section{Assumptions and main results}
\label{S2}

We derive mixing properties of an integer-valued process ${\mathbf Y}=(Y_t)_{t\in\N_0}$
defined on a probability space $(\Omega,\F,\P)$, where, for $t\geq 1$,
\begin{subequations}
\begin{eqnarray}
\label{2.1a}
Y_t \mid \F_{t-1} \,\sim\, \mbox{Pois}(\lambda_t), \\
\label{2.1b}
\lambda_t \,=\, f_t(Y_{t-1},\lambda_{t-1},Z_{t-1}),
\end{eqnarray}
\end{subequations}
and $\F_s=\sigma(Y_0,\lambda_0,Z_0,\ldots,Y_s,\lambda_s,Z_s)$. Here,
$\mathbf{\lambda}=(\lambda_t)_{t\in\N_0}$ is the process of random (non-negative) intensities
and $\mathbf{Z}=(Z_t)_{t\in\N_0}$ is a sequence of $\R^d$-valued covariates.
We assume that $Z_t$ is independent of~$\F_{t-1}$ and~$Y_t$. We do not assume that the~$Z_t$'s 
are identically distributed since we want to include cases with a possibly unbounded
trend.
Note that with a slight abuse of notation and to avoid an unnecessary case-by-case analysis
Pois(0) denotes the Dirac measure in~$0$.

In what follows we derive conditions which allow us to prove absolute regularity ($\beta$-mixing) 
of the process ${\mathbf X}=(X_t)_{t\in\N_0}$, where $X_t=(Y_t,Z_t)$.
In contrast, the intensity process $(\lambda_t)_{t\in\N_0}$ is not mixing in general;
see Remark~3 in \citet{Neu11} for a counterexample. We will show that, in case of a two-sided stationary process,
$\lambda_t=g(X_{t-1},X_{t-2},\ldots)$, for a suitable function~$g$.
This allows us to conclude that the intensity process, and the joint process $((Y_t,\lambda_t,Z_t))_{t\in\Z}$
as well, are ergodic.

Let $(\Omega,{\mathcal A},P)$ be a probability space and ${\mathcal A}_1$, ${\mathcal A}_2$
be two sub-$\sigma$-algebras of ${\mathcal A}$. Then the coefficient of absolute regularity is defined as
\bd
\beta({\mathcal A}_1,{\mathcal A}_2) \,=\, E\big[ \sup\{ |P(B\mid {\mathcal A}_1) \,-\, P(B)|\colon
\;\; B\in {\mathcal A}_2 \} \big].
\ed
For the process ${\mathbf X}=(X_t)_{t\in\N_0}$ on $(\Omega,\F,\P)$, the
coefficients of absolute regularity at the point~$k$ are defined as
\bd
\beta^X(k,n) \,=\, \beta\big( \sigma(X_0,X_1,\ldots,X_k), \sigma(X_{k+n},X_{k+n+1},\ldots) \big)
\ed
and the (global) coefficients of absolute regularity as
\bd
\beta^X(n) \,=\, \sup\{ \beta^X(k,n)\colon \;\; k\in\N_0\}.
\ed
Our approach of proving absolute regularity is inspired by the fact that one can construct,
on a suitable probability space $(\widetilde{\Omega},\widetilde{F},\widetilde{\P})$,
two versions of the process $\mathbf{X}$, $(\wtx_t)_{t\in\N_0}$ and $(\wtx_t')_{t\in\N_0}$, such that
$(\wtx_0,\ldots,\wtx_k)$ and $(\wtx_0',\ldots,\wtx_k')$ are independent and
\be
\label{mixing-coupling}
\beta^X(k,n) \,=\, \widetilde{\P}\left( \wtx_{k+n+r}\neq\wtx_{k+n+r}' \mbox{ for some } r\geq 0 \right).
\end{equation}
Indeed, for given $(\wtx_t)_{t\in\N_0}$, it follows from Berbee's lemma
(see \citet{Ber79} or \citet[Lemma~5.1]{Rio17}, for a more accessible reference) that
one can construct $(\wtx_t')_{t\geq k+n}$ following the same law as $(\wtx_t)_{t\geq k+n}$
and being independent of $(\wtx_0,\ldots,\wtx_k)$ such that (\ref{mixing-coupling}) is fulfilled.
Using the correct conditional distribution we can augment $(\wtx_t')_{t\geq k+n}$ with $\wtx_0',\ldots,\wtx_{k+n-1}'$
such that $(\wtx_0',\ldots,\wtx_k')$ is independent of $(\wtx_0,\ldots,\wtx_k)$, as required.
Such an ideal coupling is usually hard to find and we do not see a chance to obtain this in the cases
we have in mind. However, any coupling with $(\wtx_0,\ldots,\wtx_k)$ and 
$(\wtx_0',\ldots,\wtx_k')$ being independent provides an estimate of the mixing coefficient
since then 
\bd
\beta^X(k,n) \,\leq\, \widetilde{\P}\left( \wtx_{k+n+r}\neq\wtx_{k+n+r}' \mbox{ for some } r\geq 0 \right);
\ed
see our arguments below.

We obtain the following estimate 
of the coefficients of absolute regularity at the point~$k$.
\bea
\label{2.2}
\lefteqn{ \beta^X(k,n) } \nonumber \\
& = & \beta\big( \sigma(X_0,X_1,\ldots,X_k), \sigma(X_{k+n},X_{k+n+1},\ldots) \big) \nonumber \\
& \leq & \beta\big( \F_k, \sigma(X_{k+n},X_{k+n+1},\ldots) \big) \nonumber \\
& = & \beta\big( \sigma(\lambda_{k+1}), \sigma(X_{k+n},X_{k+n+1},\ldots) \big) \nonumber \\
& = & \E\left[ \sup_{C\in\sigma({\mathcal Z})} \left\{ \Big| \P((X_{k+n},X_{k+n+1},\ldots)\in C\mid \lambda_{k+1})
\,-\, \P((X_{k+n},X_{k+n+1},\ldots)\in C) \Big| \right\} \right], \qquad
\eea
where ${\mathcal Z}=\{A_1\times B_1\times\cdots\times A_m\times B_m\times\N_0\times\R^d\times\N_0\times\R^d\times\cdots\mid
A_1,\ldots,A_m\subseteq\N_0, B_1,\ldots,B_m\in {\mathcal B^d}, m\in\N\}$
is the system of cylinder sets.
Note that the last but one equality in (\ref{2.2}) follows since the process $((Y_t,\lambda_t,Z_t))_{t\in\N_0}$
is Markovian and since the conditional distribution of $(Y_t,\lambda_t,Z_t)$ under $\F_{t-1}$ depends only on
$\lambda_t$.

Since a purely analytic approach to estimate the right-hand side of (\ref{2.2}) seems to be nearly
impossible, we use a {\em stepwise} coupling method to derive the desired result.
Suppose that we have two versions of the original process $((Y_t,\lambda_t,Z_t))_{t\in\N_0}$,
$((\wty_t,\wtl_t,\wtz_t))_{t\in\N_0}$ and $((\wty_t',\wtl_t',\wtz_t'))_{t\in\N_0}$,
which are both defined on the same probability space $(\widetilde{\Omega},\widetilde{\F},\widetilde{\P})$.
If $\wtl_{k+1}$ and $\wtl_{k+1}'$ are independent
under~$\widetilde{\P}$, then we obtain from (\ref{2.2}) the following upper estimate of the coefficients
of absolute regularity at time~$k$:
\bea
\lefteqn{ \beta^X(k,n) } \nonumber \\
& \leq & \widetilde{\E}\left[ \sup_{C\in\sigma({\mathcal Z})} \left\{ \left|
\widetilde{\P}\left( (\wtx_{k+n},\wtx_{k+n+1},\ldots)\in C\mid \wtl_{k+1} \right)
\,-\, \widetilde{\P}\left( (\wtx_{k+n}',\wtx_{k+n+1}',\ldots)\in C\mid \wtl_{k+1}' \right) \right| \right\} \right]
\qquad \nonumber \\
& \leq & \widetilde{\P}\left( \wtx_{k+n+r} \neq \wtx_{k+n+r}' \quad \mbox{for some } r\in\N_0 \right) \nonumber \\
& = & \widetilde{\P}\left( \wtx_{k+n} \neq \wtx_{k+n}' \right) \nonumber \\
& & {} \,+\, \sum_{r=1}^\infty \widetilde{\P}\left( \wtx_{k+n+r} \neq \wtx_{k+n+r}', \wtx_{k+n+r-1} = \wtx_{k+n+r-1}',
\ldots, \wtx_{k+n} = \wtx_{k+n}' \right).\nonumber
\eea
\bigskip

\noindent
Thus we have just proved the following result.
\begin{prop}\label{p.mixing}
If there are two versions,
$((\wty_t,\wtl_t,\wtz_t))_{t\in\N_0}$ and $((\wty_t',\wtl_t',\wtz_t'))_{t\in\N_0}$, of the
process $((Y_t,\lambda_t,Z_t))_{t\in\N_0}$ defined by \eqref{2.1a} and \eqref{2.1b}
which are both defined on the same probability space $(\widetilde{\Omega},\widetilde{\F},\widetilde{\P})$
such that $\wtl_{k+1}$ and $\wtl_{k+1}'$ are independent under~$\widetilde{\P}$, then
\bea
\label{2.3}
\beta^X(k,n) 
& \leq &  \widetilde{\P}\left( \wtx_{k+n} \neq \wtx_{k+n}' \right) \nonumber \\
& & {} \,+\, \sum_{r=1}^\infty \widetilde{\P}\left( \wtx_{k+n+r} \neq \wtx_{k+n+r}', \wtx_{k+n+r-1} = \wtx_{k+n+r-1}',
\ldots, \wtx_{k+n} = \wtx_{k+n}' \right). \quad
\eea
\end{prop}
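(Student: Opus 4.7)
The proof plan is essentially to retrace the chain of inequalities developed in the paragraphs preceding the statement, and to observe that every step is already justified once the coupling hypothesis is in place; the proposition is, in effect, the formal repackaging of that derivation.

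First, I would start from the definition
\[
\beta^X(k,n) \,=\, \beta\bigl( \sigma(X_0,\ldots,X_k),\, \sigma(X_{k+n},X_{k+n+1},\ldots) \bigr)
\]
and enlarge the first $\sigma$-algebra to $\F_k$, which can only increase the $\beta$-coefficient by monotonicity. Since $((Y_t,\lambda_t,Z_t))$ is Markovian and the conditional law of $(Y_t,\lambda_t,Z_t)$ given $\F_{t-1}$ depends on the past only through $\lambda_t$, conditioning on $\F_k$ is equivalent to conditioning on $\lambda_{k+1}$. This yields the cylinder-set representation in (\ref{2.2}):
\[
\beta^X(k,n) \,\leq\, \E\Bigl[ \sup_{C\in\sigma({\mathcal Z})} \bigl|\P((X_{k+n},X_{k+n+1},\ldots)\in C\mid \lambda_{k+1}) - \P((X_{k+n},X_{k+n+1},\ldots)\in C)\bigr| \Bigr].
\]

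Next, I would pass to the coupling space $(\widetilde{\Omega},\widetilde{\F},\widetilde{\P})$. Both $(\wty_t,\wtl_t,\wtz_t)$ and $(\wty_t',\wtl_t',\wtz_t')$ have the law of the original process, and because $\wtl_{k+1}$ and $\wtl_{k+1}'$ are independent under $\widetilde{\P}$, the second (unconditional) probability in the display above can be rewritten as $\widetilde\P((\wtx_{k+n}',\wtx_{k+n+1}',\ldots)\in C \mid \wtl_{k+1}')$ without changing its value. The Markov property then lets me replace both conditional probabilities by their versions driven by $\wtl_{k+1}$ and $\wtl_{k+1}'$ respectively, so the supremum inside the expectation becomes a total variation distance between the two conditional laws of $(\wtx_{k+n+r})_{r\ge 0}$ and $(\wtx_{k+n+r}')_{r\ge 0}$.

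Finally, I would apply the elementary coupling inequality: for any joint construction, this total variation is dominated by $\widetilde{\P}(\wtx_{k+n+r}\neq \wtx_{k+n+r}' \text{ for some } r\ge 0 \mid \wtl_{k+1},\wtl_{k+1}')$. Taking the outer expectation and decomposing the event according to the first index $r\ge 0$ at which the two coupled processes disagree gives exactly the bound (\ref{2.3}).

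There is no real obstacle here; the only subtlety worth double-checking is that the step from $\sigma(X_0,\ldots,X_k)$ to $\sigma(\lambda_{k+1})$ is justified by the Markov structure of $((Y_t,\lambda_t,Z_t))$ combined with the fact that $\lambda_{k+1}=f_{k+1}(Y_k,\lambda_k,Z_k)$ is $\F_k$-measurable, so that conditioning on $\F_k$ reduces to conditioning on $\lambda_{k+1}$ for future events. Everything else is Berbee-style coupling manipulation.
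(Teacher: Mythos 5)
Your proposal is correct and follows essentially the same route as the paper's own derivation (which is precisely the running text preceding the proposition): monotonicity to pass from $\sigma(X_0,\ldots,X_k)$ to $\F_k$, the Markov reduction to $\sigma(\lambda_{k+1})$, passage to the coupling space with the independent copies $\wtl_{k+1},\wtl_{k+1}'$, the coupling bound on the total variation of the two conditional laws, and the decomposition according to the first index of disagreement. The only phrase to tighten is that $\P((X_{k+n},X_{k+n+1},\ldots)\in C)$ is not literally equal to $\widetilde{\P}((\wtx_{k+n}',\wtx_{k+n+1}',\ldots)\in C\mid\wtl_{k+1}')$ but to its $\widetilde{\E}$-average, so the replacement uses Jensen's inequality together with interchanging the supremum and the expectation, which is exactly what the independence of $\wtl_{k+1}$ and $\wtl_{k+1}'$ legitimizes and what the paper itself glosses in the same step.
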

\bigskip

The close relationship between absolute regularity and coupling has been known for a long time.
\citet[Theorem~2]{Ber79} showed that, for two random variables~$X$ and~$Y$ defined on the same probability space, 
the latter one can be replaced by a random variable~$Y^*$ being independent of~$X$ and following the same distribution
as~$Y$ such that the probability that~$Y^*$ differs from~$Y$ is equal to the coefficient of absolute 
regularity between~$X$ and~$Y$; see also \citet[Theorem~1.2.1.1]{Dou94} for a more accessible reference.
In our paper, we go the opposite way: Starting from a coupling result we derive an upper estimate of the coefficients
of absolute regularity.

In what follows we develop a coupling strategy to keep the right-hand side of (\ref{2.3}) small.
To this end, we couple $\wtz_{k+n+r}$ and $\wtz_{k+n+r}'$ ($r\in\N_0$)
such that they are equal with probability~1, and we apply the technique of {\em maximal coupling}
to the count variables $\wty_{k+n+r}$ and $\wty_{k+n+r}'$.
If $Q_1$ and $Q_2$ are two probability distributions on $(\N_0,2^{\N_0})$, then
one can construct random variables $\bar{X}_1$ and $\bar{X}_2$ on a suitable probability space
$(\bar{\Omega},\bar{\mathcal A},\bar{Q})$ with $\bar{Q}^{\bar{X}_i}=Q_i$, $i=1,2$, such that
\bd
\bar{Q}( \bar{X}_1 \neq \bar{X}_2 ) \,=\, d_{TV}( Q_1, Q_2 ),
\ed
where $d_{TV}(Q_1,Q_2)=\max\{|Q_1(C)-Q_2(C)|\colon \; C\subseteq \N_0\}$
denotes the total variation distance between~$Q_1$ and~$Q_2$.
(An alternative representation is given by $d_{TV}(Q_1,Q_2)=1\,-\,\sum_{k=0}^\infty \min\big\{Q_1(\{k\}),Q_2(\{k\})\big\}$.)
In our case, we have to couple among others $\wty_{k+n}$ and $\wty_{k+n}'$. 
We denote by $\widetilde{\F}_s=\sigma(\wty_0,\wtl_0,\wtz_0,\wty_0',\wtl_0',\wtz_0',\ldots,\wty_s,\wtl_s,\wtz_s,\wty_s',\wtl_s',\wtz_s')$
the $\sigma$-algebra generated by all random variables up to time~$s$.
We construct $\wty_{k+n}$ and $\wty_{k+n}'$ such that,
conditioned on $\widetilde{\F}_{k+n-1}$, they have Poisson distributions with respective
intensities $\wtl_{k+n}$ and $\wtl_{k+n}'$ and
\begin{displaymath}
\widetilde{\P}\left( \wty_{k+n} \neq \wty_{k+n}' \mid \widetilde{\F}_{k+n-1} \right) 
\,=\, d_{TV}\left( \mbox{Pois}(\wtl_{k+n}), \mbox{Pois}(\wtl_{k+n}') \right).
\end{displaymath}
Let $d\colon\; [0,\infty)\times[0,\infty)\rightarrow [0,1]$ be any distance such that
\bd
d_{TV}\left( \mbox{Pois}(\lambda), \mbox{Pois}(\lambda') \right) \,\leq\, d(\lambda, \lambda') 
\qquad \forall \lambda,\lambda'\geq 0.
\ed
Examples for such distances are given by $d(\lambda,\lambda')=\sqrt{2/e}|\sqrt{\lambda}-\sqrt{\lambda'}|$
(see e.g.~\citet[formula (5)]{Roo03} or Exercise~9.3.5(b) in \citet[page~300]{DvJ88}),
and $d(\lambda,\lambda')=|\lambda-\lambda'|$.
Hence, we can construct $\wty_{k+n}$ and $\wty_{k+n}'$ such that
\be
\label{2.4}
\widetilde{\P}\left( \wtx_{k+n} \neq \wtx_{k+n}' \mid \wtl_{k+n}, \wtl_{k+n}' \right)
\,=\, \widetilde{\P}\left( \wty_{k+n} \neq \wty_{k+n}' \mid \wtl_{k+n}, \wtl_{k+n}' \right)
\,\leq\, d\left( \wtl_{k+n}, \wtl_{k+n}' \right).
\end{equation}
Since $Z_t$ is by assumption independent of $\F_{t-1}$ and $Y_t$, we choose $\wtz_{k+n}$ and $\wtz_{k+n}'$
such that they are equal with probability~1.
In view of the other terms on the right-hand side of (\ref{2.3}), we impose the following condition.
\bigskip

\begin{itemize}
\item[{\bf (A1)}]
There exists some $L_1<1$, such that the following condition is fulfilled:
If $\lambda,\lambda'\geq 0$, $Y\sim\mbox{Pois}(\lambda)$ being independent of $Z_t$, then
\bd
\E\left[ d\left( f_t(Y,\lambda,Z_t), f_t(Y,\lambda',Z_t) \right) \right]
\,\leq\, L_1 \; d( \lambda, \lambda' ) \qquad \forall t\in\N.
\ed 
\end{itemize}
\bigskip

Then, if we continue to use maximal coupling,
\bean
\lefteqn{ \widetilde{\P}\left( \wtx_{k+n+1} \neq \wtx_{k+n+1}', \wtx_{k+n} = \wtx_{k+n}' \mid
\wtl_{k+n}, \wtl_{k+n}' \right) } \\
& = & \widetilde{\E}\left( \widetilde{\P}\left( \wtx_{k+n+1} \neq \wtx_{k+n+1}' \mid \widetilde{\F}_{k+n} \right)
\1( \wtx_{k+n} = \wtx_{k+n}' ) \mid \wtl_{k+n}, \wtl_{k+n}' \right) \\
& \leq & \widetilde{\E}\left( d( f_t(\wty_{k+n},\wtl_{k+n},\wtz_{k+n}), f_t(\wty_{k+n}',\wtl_{k+n}',\wtz_{k+n}') ) \;
\1( \wtx_{k+n} = \wtx_{k+n}' ) \Big| \wtl_{k+n}, \wtl_{k+n}' \right) \\
& \leq & \widetilde{\E}\left( d( f_t(\wty_{k+n},\wtl_{k+n},\wtz_{k+n}), f_t(\wty_{k+n},\wtl_{k+n}',\wtz_{k+n}) ) 
\Big| \wtl_{k+n}, \wtl_{k+n}' \right) \\
& \leq & L_1 \; d( \wtl_{k+n}, \wtl_{k+n}' ).
\eean
Proceeding in the same way we obtain that
\bea
\label{2.5}
\lefteqn{ \widetilde{\P}\left( \left. \wtx_{k+n+r} \neq \wtx_{k+n+r}', \wtx_{k+n+r-1} = \wtx_{k+n+r-1}', 
\ldots, \wtx_{k+n} = \wtx_{k+n}' \right| \wtl_{k+n}, \wtl_{k+n}' \right) } \nonumber \\
& \leq & L_1^r \; d( \wtl_{k+n}, \wtl_{k+n}' )
\qquad \qquad \qquad \qquad \qquad \qquad \qquad \qquad \qquad \qquad \qquad
\eea
holds for all $r\in\N$.
It follows from (\ref{2.3}) to (\ref{2.5}) that
\be
\label{2.6}
\beta^X(k,n) \,\leq\, \frac{1}{1-L_1} \;
\widetilde{\E}\left[ d( \wtl_{k+n}, \wtl_{k+n}' ) \right].
\end{equation}

To proceed, we have to find an upper estimate of $\widetilde{\E}[ d(\wtl_{k+n},\wtl_{k+n}') | ]$,
still under the condition that $\wtl_{k+1}$ and $\wtl_{k+1}'$ are independent, having the same distribution as the
frequency $\lambda_{k+1}$ of the original process. We make the following assumption.
\bigskip

\begin{itemize}
\item[{\bf (A2)}]
There exists some $L_2<1$, such that the following condition is fulfilled.
If $\lambda,\lambda'\geq 0$, then there exists a coupling of $(Y,Z)$ and $(Y',Z')$,
with $Y\sim\mbox{Pois}(\lambda)$, $Y'\sim\mbox{Pois}(\lambda')$, $Z,Z'\stackrel{d}{=}Z_t$,
$Z$ being independent of $Y$ and $Z'$ being independent of $Y'$, such that
\bd
\E \left[d\left( f_t(Y,\lambda,Z), f_t(Y',\lambda',Z') \right) \right]
\,\leq\, L_2 \; d( \lambda, \lambda' ) \qquad \forall t\in\N.
\ed 
\end{itemize}
\bigskip

If (A2) is fulfilled, we obtain that
\bd
\widetilde{\E}\left( d( \wtl_{k+n}, \wtl_{k+n}' ) \Big| \wtl_{k+1}, \wtl_{k+1}' \right)
\,\leq\, L_2^{n-1} \; d( \wtl_{k+1}, \wtl_{k+1}' ).
\ed 
Therefore, we obtain in conjunction with (\ref{2.6}) that
\be
\label{2.7}
\beta^X(k,n)
\,\leq\, \frac{1}{1-L_1} \; L_2^{n-1} \; \widetilde{\E}\left[ d( \wtl_{k+1}, \wtl_{k+1}' ) \right].
\end{equation}
Finally, in order to obtain a good bound for $\beta^X(n)$ we have to ensure that
$\sup\{ \widetilde{\E}d(\wtl_{k+1},\wtl_{k+1}')\colon \;\; k\in\N_0\}<\infty$.
Recall that, with the above method of estimating $\beta^X(k,n)$, $\wtl_{k+1}$ and $\wtl_{k+1}'$
have to be independent, following the same distribution as $\lambda_{k+1}$.
In the case of a stationary process, an upper bound may follow from the fact that
the intensities $\lambda_k$ are stochastically bounded in an appropriate sense.
Such an argument, however, cannot be used if the process has an explosive behavior
which means that we genuinely have to derive an upper bound for $\widetilde{\E}d(\wtl_{k+1},\wtl_{k+1}')$,
with an appropriately chosen distance~$d$; see the examples in the next section for the necessity
of a tailor-made way of handling this problem.
As above, it seems to be difficult to derive an upper bound for
$\widetilde{\E}d(\wtl_{k+1},\wtl_{k+1}')$ in a purely analytical way.
Therefore, we employ once more a coupling idea and the desired upper bound will be obtained
by observing two {\em independent} versions $(\wtl_t)_{t\in\N_0}$ and $(\wtl_t')_{t\in\N_0}$
of the original intensity process. We impose the following condition.
\bigskip

\begin{itemize}
\item[{\bf (A3)}]
Let $(\wtl_t)_{t\in\N_0}$ and $(\wtl_t')_{t\in\N_0}$ be two independent processes
on $(\widetilde{\Omega},\widetilde{\F},\widetilde{\P})$ which have the
same distribution as $(\lambda_t)_{t\in\N_0}$. Suppose that there exist constants $L_3<1$ and 
$M_0,M_1<\infty$ such that
\begin{itemize}
\item[(i)] $\quad\widetilde{\E} d(\wtl_0, \wtl_0') \,\leq\, M_0$,
\item[(ii)] $\quad\widetilde{\E}\left( d\left(\wtl_{t+1}, \wtl_{t+1}' \right) \Big| \wtl_t, \wtl_t' \right)
\,\leq\, L_3 \; d( \wtl_t, \wtl_t' ) \,+\, M_1\qquad \forall t\in\N$.
\end{itemize}
\end{itemize}
\bigskip

If (A3) is fulfilled, then
\bean
\widetilde{\E} d\big( \wtl_1, \wtl_1' \big)
& \leq & \widetilde{\E}\left[ \widetilde{\E}\left( d( \wtl_1, \wtl_1' )
\Big| \wtl_0, \wtl_0' \right) \right] \\
& \leq & L_3 \; \widetilde{\E}\left[ d( \wtl_0, \wtl_0' ) \right] \,+\, M_1 \\
& \leq & L_3 \; M_0 \,+\, M_1.
\eean
Furthermore, since $((\wtl_t,\wtl_t'))_{t\in\N_0}$ is a Markov chain,
\bean
\widetilde{\E} d( \wtl_2, \wtl_2')
& \leq & \widetilde{\E}\left[ \widetilde{\E}\left( \widetilde{\E}\left( 
d( \wtl_2, \wtl_2' ) \Big| \wtl_1, \wtl_1' \right) 
\Big| \wtl_0, \wtl_0' \right) \right] \\
& \leq & M_1 \,+\, L_3 \; \left( L_3 \; \widetilde{\E} d( \wtl_0, \wtl_0' )
\,+\, M_1 \right) \\
& \leq & M_1 \; \left( 1 \,+\, L_3 \right) \,+\, L_3^2 \; M_0.
\eean
By induction we obtain that
\be
\label{2.8}
\widetilde{\E} d( \wtl_k, \wtl_k' ) \,\leq\, \frac{M_1}{1 \,-\, L_3} \,+\, M_0
\end{equation}
holds for all $k\in\N$.
Now we obtain from (\ref{2.7}) and (\ref{2.8}) the following result.
\bigskip

{\thm
\label{T2.1}
Suppose that (A1) to (A3) are fulfilled. 
\begin{itemize}
\item[(i)] Then
\bd
\beta^X(n)
\,\leq\, L_2^{n-1} \; \frac{1}{1-L_1} \; \left( \frac{M_1}{1 \,-\, L_3} \,+\, M_0 \right).
\ed
\item[(ii)] Suppose in addition that $((Y_t,\lambda_t,Z_t))_{t\in\Z}$ is a two-sided strictly
stationary version of the process.
Then there exists a $(\sigma({\mathcal Z})-{\mathcal B})$-measurable function~$g$,
where ${\mathcal Z}=\{A_1\times B_1\times\cdots\times A_m\times B_m\times\N_0\times\R^d\times\N_0\times\R^d\times\cdots\mid
A_1,\ldots,A_m\subseteq\N_0, B_1,\ldots,B_m\in {\mathcal B^d}, m\in\N\}$ is the system of cylinder sets,
such that
\be
\label{2.11}
\lambda_t \,=\, g(X_{t-1},X_{t-2},\ldots) \qquad a.s.
\end{equation}
The process $((Y_t,\lambda_t,Z_t))_{t\in\Z}$ is ergodic.
\end{itemize}
}
\bigskip

\begin{rem}
As it can be seen from the proofs of Corollaries~\ref{C3.1} and~\ref{C3.2} below,
the broad applicability of this result is assured by flexibility in the choice of the metric~$d$ in (A1) to (A3);
for details see the discussion about Corollary~\ref{C3.1}. 
\end{rem}
\bigskip

In retrospect, we note that our coupling method which delivers an upper estimate for $\beta^X(n)=\sup\{\beta^X(k,n)\colon\,k\in\N_0\}$
consists of three phases: (\ref{2.7}) shows that the upper estimate depends on the expectation
of $d(\lambda,\lambda')$, where $\lambda$ and $\lambda'$ are independent versions of $\lambda_{k+1}$.
Since this expectation can hardly be computed analytically we consider two {\em independent}
versions, $(\wtl_t)_{t\in\N_0}$ and $(\wtl_t')_{t\in\N_0}$, of the intensity process and we derive
recursively an upper estimate of $\widetilde{\E}d(\wtl_{k+1},\wtl_{k+1}')$.
Condition~(A3) ensures boundedness of this expectation.
Once we have a uniform bound for~$\widetilde{\E}d(\wtl_{k+1},\wtl_{k+1}')$, we start a second coupling mechanism
which keeps the probability of $\wtx_{k+n}\neq \wtx_{k+n}'$ small; see (\ref{2.3}) for how this
enters the upper estimate for $\beta^X(k,n)$. This is accomplished by a coupling which leads to
an exponential decay of $d(\wtl_{k+n},\wtl_{k+n}')$ as $n\to\infty$; (A2) serves this purpose.
And finally, it can also be seen from (\ref{2.3}) that the term
$\sum_{r=1}^\infty \widetilde{\P}\left( \wtx_{k+n+r} \neq \wtx_{k+n+r}', \wtx_{k+n+r-1} = \wtx_{k+n+r-1}',
\ldots, \wtx_{k+n} = \wtx_{k+n}' \right)$ contributes to the upper estimate for $\beta^X(k,n)$.
For this we have to take care that $\wtx_{k+n+r}$ differs from $\wtx_{k+n+r}'$ with a small probability,
given $\wtx_{k+n}=\wtx_{k+n}',\ldots,\wtx_{k+n+r-1}=\wtx_{k+n+r-1}$.
Condition~(A1) is intended to keep the probability of these undesired events small.
\bigskip

\section{Examples}
\label{S3}
\subsection{Linear Poisson-INGARCH processes}
\label{SS3.1}

In this section we discuss some of the most popular specifications for INGARCH(1,1) processes.
We begin with a linear INGARCH(1,1) process allowing for real-valued covariates, where
\be
\label{3.1}
\lambda_{t+1} \,=\, a_t \; Y_t \,+\, b_t \; \lambda_t \,+\, Z_t.
\end{equation}
Without covariates and with $a_t=a$, $b_t=b$ $\forall t$, this model has become popular for modeling count data.
\citet{RS00} proposed such a model for describing the number of trades on the New York Stock Exchange 
in certain time intervals and called it BIN(1,1) model.
Stationarity  and other properties for this model where derived by \citet{Str00},
\citet{FLO06} who referred to it as INGARCH(1,1) model, and \citet{FRT09}. 
\citet{ACKR16} generalized model (\ref{3.1}) by augmenting a covariate process 
and coined the term PARX (Poisson autoregression with exogeneous covariates).
These authors also proved the existence of a stationary distribution.
We study first the non-explosive case.

{\cor
\label{C3.1}
Suppose that
\begin{itemize}
\item[(i)] (\ref{2.1a}) is fulfilled,
\item[(ii)] (\ref{3.1}) holds, where $a_t,b_t\geq 0$ and $L_2=\sup\{a_t+b_t\colon\; t\in\N_0\}<1$,
\item[(iii)] $\sup\{\E Z_t\colon\; t\in\N_0\}<\infty$ and $Z_t$ is a non-negative random variable (covariate) which is independent of
$\lambda_0,Y_0,Z_0,\ldots,\lambda_{t-1},Y_{t-1},Z_{t-1},\lambda_t,Y_t$,
\item[(iv)]
$\E\lambda_0<\infty$.
\end{itemize}
Then the process $(X_t)_{t\in\N_0}$ is absolutely regular with coefficients
satisfying
\bd
\beta^X(n)
\,\leq\, L_2^{n-1} \; \frac{1}{1-L_1} \; M,
\ed
where $L_1=\sup\{b_t\colon\; t\in\N_0\}$ 
and $M=2(\E \lambda_0 + \sup\{\E Z_t\colon\; t\in\N_0\}/(1-L_2))$.
}
\bigskip{
\rem{As it can be seen from the proof, we obtain the same result if we consider more generally  $\lambda_{t+1} \,= g(\, a_t \; Y_t \,+\, b_t \; \lambda_t \,+\, Z_t)$ for some Lipschitz function $g$ with $\text{Lip}(g)\leq 1$ under conditions (i), (iii), and (iv) of Corollary~\ref{C3.1} if  $L_2=\sup\{|a_t|+|b_t|\colon\; t\in\N_0\}<1$. In particular, we obtain absolute regularity with an exponential rate for softplus INGARCH(1,1) processes with exogenous regressors under the conditions on the coefficients $a_t$ and $b_t$ and on the regressors $(Z_t)_t$ mentioned above. Softplus INGARCH processes without exogeneous regressors have been introduced just recently by \citet{WZH22}, 
	where $g=s_c$ is the so-called softplus function
	\begin{displaymath}
		s_c(x) \,=\, c\, \ln( 1 \,+\, e^{x/c}),\quad \text{with }c>0;
	\end{displaymath}
see also Section~4 for further details.}\label{rem2}
}

\bigskip

The proof of Corollary~\ref{C3.1} relies on the application of Theorem~\ref{T2.1} with the simple metric
$d(\lambda,\lambda')=|\lambda-\lambda'|$.
In case of an explosive INGARCH(1,1) process, however, it could well happen that this distance 
 is no longer appropriate.
To see this, consider the simple case of a specification
\bd
\lambda_{t+1} \,=\, a Y_t \,+\, C_t,
\ed
where $0<a<1$ and $C_t$ being an arbitrarily large non-negative constant.
Recall that our estimate (\ref{2.7}) of the local coefficients
of absolute regularity $\beta^X(k,n)$ contains the factor $\widetilde{\E}d(\wtl_{k+1},\wtl_{k+1}')$
which would be $\widetilde{\E}|\wtl_{k+1}-\wtl_{k+1}'|$ using the $L_1$-distance.
Let $((\wty_t,\wtl_t))_{t\in\N_0}$ and $((\wty_t',\wtl'_t))_{t\in\N_0}$ be two independent versions of the
bivariate process.
Then $\wtl_{t+1}-\wtl'_{t+1}=a(\wty_t-\wty'_t)$ and, conditioned on $\wtl_t$, $\wtl_t'$,
$\wty_t$ and $\wty_t'$ are independent and Poisson distributed with respective intensities $\wtl_t$ and $\wtl_t'$.
Since $\wtl_t,\wtl_t'\geq C_{t-1}$ it follows that
$\widetilde{\E}|\wtl_{t+1}-\wtl_{t+1}'|\rightarrow\infty$ as $C_{t-1}\to\infty$,
which means that assumption~(A3) will be violated.
We show that the alternative distance $|\sqrt{\lambda}-\sqrt{\lambda'}|$ saves the day.
The use of such a square root transformation should not come as a big surprise. 
Recall that $d_{TV}\left( \mbox{Pois}(\lambda), \mbox{Pois}(\lambda') \right) \,\leq\, d(\lambda, \lambda') 
\leq\sqrt{2/e}|\sqrt{\lambda}-\sqrt{\lambda'}|$.
On the other hand, it is well-known that a square root transformation on Poisson variates
has the effect of being variance-stabilizing. In fact, if $Y_\lambda\sim \mbox{Pois}(\lambda)$,
then $\E[(\sqrt{Y_\lambda}-\sqrt{\lambda})^2]\rightarrow 1/4$ as $\lambda\rightarrow\infty$;
see e.g.~\citet[p.~96]{MCN89}.
This transformation is similar to the Anscombe transform ($x\mapsto 2\sqrt{x+3/8}$) which is also a classical
tool to treat Poisson data.
On the other hand, for small values of $\lambda$ and $\lambda'$, the distance $|\lambda-\lambda'|$
turns out to be more suitable when a contraction property has to be derived; see the proof of Corollary~\ref{C3.2} below.
In view of this, we choose
\bea
\label{3.2}
d(\lambda, \lambda') 
& = & \left\{ \begin{array}{ll}
|\lambda-\lambda'|/M & \quad \mbox{ if } \sqrt{\lambda}+\sqrt{\lambda'} \leq M, \\
|\sqrt{\lambda}-\sqrt{\lambda'}| & \quad \mbox{ if } \sqrt{\lambda}+\sqrt{\lambda'} > M
\end{array} \right. \nonumber \\
& = & \min\left\{ |\lambda-\lambda'|/M, |\sqrt{\lambda}-\sqrt{\lambda'}| \right\},
\eea
where a suitable choice of the constant $M\in (0,\infty)$ becomes apparent from the proof of Corollary~\ref{C3.2} below.
\bigskip

{\cor
\label{C3.2}
Suppose that
\begin{equation}
\label{3.11}
\lambda_{t+1} \,=\, a_t \; Y_t \,+\, b_t \; \lambda_t \,+\, Z_t,
\end{equation}
where
\begin{itemize}
\item[(i)] $a_t,b_t\geq 0$ with $\sup\{a_t+b_t\colon \;t\in\N_0\}<1$,
\item[(ii)] $\sup\{ \E|\sqrt{Z_t}-\E\sqrt{Z_t}|\colon \; t\in\N_0\}<\infty$,
\item[(iii)] $\E\sqrt{\lambda_0}<\infty$.
\end{itemize}
Then the process $(X_t)_{t\in\N_0}$ is absolutely regular with coefficients
satisfying
\bd
\beta^X(n) \,=\, O\left( \rho^n \right)
\ed
for some $\rho<1$.
}
\bigskip

Note that the random variable $Z_t$ may get arbitrarily large as $t$ increases,
for example, it could represent a trend. Hence, we allow for nonstationary, explosive scenarios here.
\bigskip

\subsection{Log-linear Poisson-INGARCH processes}
\label{SS3.2}

Next, we consider the log-linear model proposed by \citet{FT11}.

{\prop
\label{P3.4}
Suppose that
\begin{equation}
\label{3.13}
\log(\lambda_{t+1}) \,=\, d \,+\, a \; \log(\lambda_t) \,+\, b \; \log(Y_t+1) \,+\, Z_t.
\end{equation}
where $d\in\R$ and $|a|+|b|<1$, and $(Z_t)_{t\in\N_0}$ are i.i.d.~random variables such that $\E|Z_0|<\infty$.

Then
\begin{itemize}
\item[(i)] there exists a (strictly) stationary version of $((Y_t,\lambda_t,Z_t))_t$,
\item[(ii)] if additionally $\E[e^{2Z_0}]<\infty$, then the process $(X_t)_t$ is absolutely regular with exponentially
decaying coefficients.
\end{itemize}
}
\bigskip

\subsection{Mixed Poisson-INGARCH processes}
\label{SS3.3}

{The above results can be generalized to models where the Poisson distribution
is replaced by certain mixed Poisson distributions.
We consider two cases, the zero-inflated Poisson and the negative binomial distribution,
in more details. In both cases, our model can be put in the above framework by setting
\be
\label{3.31}
\lambda_t \,=\, f_t(Y_{t-1},\lambda_{t-1},Z_{t-1})
\,:=\, Z_{t-1}^{(2)} \; \widetilde{f}_t(Y_{t-1},\lambda_{t-1},Z_{t-1}^{(1)}),
\end{equation}
where $Z_{t-1}=(Z_{t-1}^{(1)},Z_{t-1}^{(2)})$ is a covariate with independent components $Z^{(1)}_t$ and  $Z^{(2)}_t$,
$Z^{(2)}_t$ being non-negative.}

{If $(Z_t^{(2)})_t$ in \eqref{3.31} is a sequence of i.i.d.~$\mbox{Bin}(1,p)$ variables for some $p\in (0,1)$ and if 
\begin{displaymath}
Y_t\mid \F_{t-1}  \,\sim\, \mbox{Pois}(\lambda_t),
\end{displaymath}
with $\F_{t-1}=\sigma\big(Y_0,\lambda_0,Z_0,\dots,Y_{t-1},\lambda_{t-1}, Z_{t-1}\big)$
then, conditioned on\\ $\F_{t-1}^{(1)}=\sigma\big(Y_0,\lambda_0,Z_0,\dots,Y_{t-2},\lambda_{t-2}, Z_{t-2},Y_{t-1},\lambda_{t-1}, Z_{t-1}^{(1)}\big)$,
$Y_t$ has a zero-inflated Poisson distribution (see \citet{Lam92}) with parameters~$p$ and $\nu_t=\widetilde{f}_t(Y_{t-1},\lambda_{t-1},Z_{t-1}^{(1)})$, i.e.
\begin{displaymath}
P\big( Y_t=k \mid \F_{t-1}^{(1)} \big) \,=\, \left\{ \begin{array}{ll}
p \; e^{-\nu_t} \nu_t^k/k! & \quad \mbox{ if } k\geq 1, \\
(1-p) \,+\, p \; e^{-\nu_t} & \quad \mbox{ if } k=0.
\end{array} \right.
\end{displaymath}
Similar INGARCH models with such a distribution were considered e.g.~in \citet{Zhu11}
to account for overdispersion and potential extreme observations.}

{If instead $(Z_t^{(2)})_t$ has a Gamma distribution with parameters $a,b>0$ and
\begin{displaymath}
Y_t\mid \F_{t-1}  \,\sim\, \mbox{Pois}(\lambda_t),
\end{displaymath}
then, conditioned on $\F_{t-1}^{(1)}$ as above, $Y_t$ has a negative binomial distribution.
Indeed, since a \mbox{Gamma}$(a,b)$ distribution has a density~$p$ with
\begin{displaymath}
p(x) \,=\, \left\{ \begin{array}{ll}
\frac{b^{a}}{\Gamma(a)} \; x^{a-1} \; e^{-bx} & \quad \mbox{ if } x\geq 0, \\
0 & \quad \mbox{ if } x<0 \end{array} \right.
\end{displaymath}
we obtain that, for all $k\in\N_0$, 
\bean
P(Y_t=k \mid \F_{t-1}^{(1)})
& = & \int_0^\infty \frac{b^{a}}{\Gamma(a)} \; x^{a-1} \; e^{-bx} \;\; e^{-\lambda x} 
\; \frac{(\lambda x)^k}{k!} \, dx \\
& = & \frac{1}{\Gamma(a)\; k!} \; b^{a} \; \lambda^k \; \int_0^\infty x^{a+k-1} \; 
e^{-(\lambda+b)x} \, dx \\
& = & \frac{\Gamma(a+k)}{\Gamma(a)\; k!} \; \left( \frac{b}{\lambda+b} \right)^{a}
\; \left(1 \,-\, \frac{b}{\lambda+b} \right)^k.
\eean
This is the probability mass function of a $\mbox{NB}(a,b/(\lambda+b))$ distribution.}

{In both cases, we may use Theorem~\ref{T2.1} to prove that the process $(X_t)_t$ is absolutely regular 
with exponentially decaying coefficients. Note that under validity of (A3), it suffices to check (A1) and (A2) for $\widetilde f_t$ rather than $f_t$ (with $L_1, \, L_2< b/a$ for the NB example). To see this, consider a coupling such that $\widetilde Z_t^{(2)}=\widetilde Z_t^{(2)'}$ which then gives
$$
\begin{aligned}
&	\widetilde{E}\left( d(\widetilde Z_t^{(2)}\; \widetilde f_t(\widetilde{Y}_t,\widetilde{\lambda}_t,\widetilde{Z}^{(1)}_t),
\widetilde Z_t^{(2)}\; f(\widetilde{Y}_t',\widetilde{\lambda}_t',\widetilde Z^{(1)'}_t))
\mid \widetilde{\lambda}_t,\widetilde{\lambda}_t' \right)\\
&\,=\, \widetilde E[\widetilde Z_t^{(2)}] \; \widetilde{E}\left( d(f(\widetilde{Y}_t,\widetilde{\lambda}_t,\widetilde{Z}^{(1)}_t),
f(\widetilde{Y}_t',\widetilde{\lambda}_t',\widetilde{Z}^{(1)'}_t)) \mid \widetilde{\lambda}_t,\widetilde{\lambda}_t' \right).
\end{aligned}
$$
Those are the two most suitable cases for applications; anyway the distribution of other independent variables $Z^{(2)}_t$ for which  (A1) and (A2) hold can also be considered.  }
\bigskip

\section{Relation to previous work and possible perspectives}
\label{S4}

In the context of {\em stationary} INGARCH processes, absolute regularity with a geometric
decay of the mixing coefficients of the count process
has already been proved in \citet{Neu11} under a fully contractive condition,
\be
\label{4.1}
\left| f(y,\lambda) \,-\, f(y',\lambda') \right|
\,\leq\, a\; |y-y'| \,+\, b\; |\lambda-\lambda'| \qquad \forall y,y'\in\N_0, \forall \lambda,\lambda'\geq 0,
\end{equation}
where $a$ and $b$ are non-negative constants with $a+b<1$.
\citet{DN19} proved absolute regularity with a somewhat unusual subgeometric
decay of the coefficients for GARCH and INGARCH processes of arbitrary order~$p$ and~$q$
under a weaker semi-contractive condition,
\be
\label{4.2}
\big| f(y_1,\ldots,y_p;\lambda_1,\ldots,\lambda_q) \,-\, f(y_1,\ldots,y_p;\lambda_1',\ldots,\lambda_q') \big|
\,\leq\, \sum_{i=1}^q c_i | \lambda_i-\lambda_i' |
\end{equation}
for all $y_1,\ldots,y_p\in\N_0$; $\lambda_1,\ldots,\lambda_q,\lambda_1',\ldots,\lambda_q'\geq 0$, 
where $c_1,\ldots,c_q$ are non-negative constants with $c_1+\cdots +c_q<1$.

For the specification (\ref{3.11}) and without a covariate ($Z_t=0$ $\forall t$), conditions (\ref{4.1}) and (\ref{4.2}) are both
fulfilled. However, in case of a non-stationary covariate process $(Z_t)_{t\in\N_0}$,
stationarity of the process $((Y_t,\lambda_t))_{t\in\N_0}$ might fail and the results 
in the above mentioned papers cannot be used.
More seriously, in case of an explosive behavior, e.g. if $Z_t$ is non-random with $Z_t\rightarrow \infty$
as $t\to\infty$, the stability condition (2.5) in \citet{Neu11} as well as the
drift condition (A1) in \citet{DN19} are violated and a direct adaptation of the proofs
in those papers seems to be impossible.

In case of a specification $\lambda_t=(a\sqrt{Y_{t-1}}+b\sqrt{\lambda_{t-1}})^2$ we obtain that
\bd
\left| \wtl_{t+1} \,-\, \wtl_{t+1}' \right| 
\,=\, \left| a^2 (\wty_t-\wty_t') \,+\, b^2 (\wtl_t-\wtl_t') \,+\,
2ab \left( \sqrt{\wty_t} \sqrt{\wtl_t} \,-\, \sqrt{\wty_t'} \sqrt{\wtl_t'} \right) \right|.
\ed
If $\wty_t$ and $\wty_t'$ are equal but large, then the right-hand side of this equation will be
dominated by the term $2ab\sqrt{\wty_t}|\sqrt{\wtl_t}-\sqrt{\wtl_t'}|$ which shows that both
(\ref{4.1}) and (\ref{4.2}) are violated. However, Theorem~2.1 is applicable.
One can follow the lines of the proof of Corollary~\ref{C3.1} to verify the validity of (A1) to (A3)
for $d(\lambda, \lambda')=|\sqrt{\lambda}-\sqrt{\lambda'}|$.
\bigskip

{We would like to mention that similar results as in our paper are possible
for INGARCH models with distributions different from the Poisson.
\citet{DMN21} proved existence and uniqueness of a stationary distribution
and absolute regularity of the count process for models where the Poisson
distribution is replaced by the distribution of the difference of two independent
Poisson variates (special case of a Skellam distribution).
We expect similar results in the case of a generalized Poisson distribution
which was advocated in the context of INGARCH models in \citet{Zhu12}.
Moreover, standard GARCH models with a normal distribution can be treated by this approach as well.}

{After our paper was completed, a referee brought to our attention a recently accepted
paper by \citet{WZH22}, where the log-linear function is replaced by the  softplus function~$s_c$ stated in Remark~\ref{rem2}.
The corresponding Poisson-INGARCH model is specified by
\begin{displaymath}
\lambda_t \,=\, s_c\Big( \alpha_0 \,+\, \sum_{i=1}^p \alpha_i X_{t-i}
\,+\, \sum_{j=1}^q \beta_j \lambda_{t-j} \Big).
\end{displaymath}
These authors proved existence and uniqueness of a stationary distribution and,
relying on results derived by \citet{DN19}, absolute regularity of the count process 
with a \textit{sub}exponential decay rate for the corresponding mixing coefficients under weaker summability assumptions on the coefficients than in our Remark~\ref{rem2} (but neither allowing for exogenous regressors nor for time-varying coefficients).} The setting of general observation-driven models with covariates is also considered in \cite{DNT20}.
\bigskip

\section{Testing for a  trend in linear INARCH(1) models with application to COVID-19 data}
\label{S5}
\subsection{Statistical study}
Suppose that we observe $Y_0,\ldots,Y_n$ of a linear INARCH process as in (\ref{3.1}) with
$b_t=0,\;\forall t$. We aim to test stationarity versus the presence of an isotonic
trend. Thus, the null hypothesis will be that $\E Y_1=\cdots =\E Y_n$ while the
alternative can be characterized by $\E Y_1\leq\E Y_2\leq\cdots \leq\E Y_n$ with at least
one strict inequality in this chain of inequalities.
When we fit a linear model
\bd
Y_t \,=\, \theta_0 \,+\, \theta_1 \; t \,+\, \varepsilon_t, \qquad t=1,\ldots,n,
\ed
with a possibly non-stationary sequence of  innovations $(\varepsilon_t)_t$,
then the null hypothesis corresponds to $\theta_1=0$ and the alternative to $\theta_1>0$.
(Even if the above linear model is not adequate, a projection will lead to $\theta_1>0$.)
The following discussion will be simplified when we change over to
an orthogonal regression model,
\bd
Y_t \,=\, \theta_0 \,+\, \theta_1 \; w_t
\,+\, \varepsilon_t, \qquad t=1,\ldots,n,
\ed
where $w_t=(t-\frac{n+1}{2})/\sqrt{\sum_{s=1}^n (s-\frac{n+1}{2})^2}$.
Then the columns in the corresponding design matrix are orthogonal and the $l_2$ norm
of the vector composed of the entries in the second column is equal to 1.
Therefore, we obtain for the least squares estimator $\widehat{\theta}_1$ of $\theta_1$
that
\bd
\widehat{\theta}_1 \,=\, \sum_{t=1}^n w_t Y_t,
\ed
As before, we have $\theta_1=\E \widehat{\theta}_1>0$ if there is any positive (linear or nonlinear)
trend and $\theta_1=0$ under the null hypothesis.
Therefore, $\widehat{\theta}_1$ can be used as a test statistic.
\bigskip

\begin{prop}\label{t1.ex1}
Suppose that $Y_0,\dots, Y_n$ is a stretch of observations of a stationary INARCH(1) with constant coefficients such that
$$
\lambda_t=aY_{t-1}+b_0\quad\text{with } a\in(0,1),~b_0\geq 0.
$$
Then, with  $\sigma^2=b_0/(1-a)^3$,
$$
\widehat{\theta}_1 \,\stackrel{d}{\longrightarrow}\, Z_0 \sim \mathcal N(0,\sigma^2).
$$ 
\end{prop}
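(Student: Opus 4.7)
The plan is to reduce the problem to a martingale CLT using the AR(1) representation of the INARCH(1) process. Let $\mu:=b_0/(1-a)=\E Y_0$ and set $\eta_t:=Y_t-\lambda_t$. Then $(\eta_t)_t$ is a martingale-difference sequence with respect to $(\F_t)_t$ satisfying $\E[\eta_t^2\mid\F_{t-1}]=\lambda_t$, and one has the AR(1) identity $Y_t-\mu=a(Y_{t-1}-\mu)+\eta_t$, hence the Wold representation $Y_t-\mu=\sum_{j\ge 0}a^j\eta_{t-j}$. Because $\sum_{t=1}^n w_t=0$, we may replace $Y_t$ by $Y_t-\mu$ inside $\widehat\theta_1$.

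A Beveridge--Nelson-type decomposition isolates the martingale part. Setting $R_t:=\frac{a}{1-a}\sum_{j\ge 0}a^j\eta_{t-j}$, a short calculation gives $Y_t-\mu=(1-a)^{-1}\eta_t+R_{t-1}-R_t$, and hence
\begin{equation*}
\widehat\theta_1=\frac{1}{1-a}\sum_{t=1}^n w_t\eta_t\;+\;\sum_{t=1}^n w_t(R_{t-1}-R_t).
\end{equation*}
Summation by parts rewrites the remainder as $w_1R_0-w_nR_n+\sum_{t=1}^{n-1}(w_{t+1}-w_t)R_t$. Since $|w_{t+1}-w_t|=\sqrt{12/(n(n^2-1))}=O(n^{-3/2})$, $\max_t|w_t|=O(n^{-1/2})$, and $\sup_t\var(R_t)<\infty$ with geometrically decaying autocovariances (both following from the stationary $L^2$-bound $\E\eta_t^2=\mu$), an elementary variance estimate shows that this remainder is $o_P(1)$.

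It then remains to apply the martingale CLT (e.g.\ Hall and Heyde, Thm.~3.2) to $M_n:=\sum_{t=1}^n w_t\eta_t$, whose predictable quadratic variation is $\langle M\rangle_n=a\sum_t w_t^2Y_{t-1}+b_0\sum_t w_t^2$. Using $\sum_t w_t^2=1$ and the crude bound
\begin{equation*}
\var\Big(\sum_t w_t^2Y_{t-1}\Big)\le \max_t w_t^2\,\sum_{s,t}w_s^2|\cov(Y_{s-1},Y_{t-1})|=O(n^{-1}),
\end{equation*}
which exploits the geometric autocovariance $\gamma(h)=a^{|h|}\mu/(1-a^2)$ of the stationary INARCH(1) sequence, one obtains $\langle M\rangle_n\to a\mu+b_0=\mu$ in probability. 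The Lindeberg condition is routine because $\max_t|w_t|=O(n^{-1/2})$ and $\E\eta_t^4<\infty$ (Poisson fourth moments are polynomials in $\lambda_t$ with bounded expectation). Therefore $M_n\stackrel{d}{\longrightarrow}\mathcal N(0,\mu)$, whence $\widehat\theta_1\stackrel{d}{\longrightarrow}\mathcal N\!\big(0,\mu/(1-a)^2\big)=\mathcal N(0,b_0/(1-a)^3)$, as claimed.

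The main obstacle is the control of the Beveridge--Nelson remainder uniformly in $t$ together with the $L^2$-convergence of $\langle M\rangle_n$ under the non-constant weights $w_t$; once the stationarity hypothesis delivers uniform-in-$t$ second-moment bounds on $\eta_t$ and $R_t$ and the geometric decay of the covariances, both follow from direct variance computations and the rest of the argument is standard.
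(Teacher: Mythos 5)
Your proof is correct, but it follows a genuinely different route from the paper. The paper treats $\widehat\theta_1=\sum_t w_t(Y_t-\E Y_t)$ directly as a weighted sum of the strongly mixing sequence $(Y_t)_t$: it invokes the geometric $\beta$-mixing established in Corollary~\ref{C3.1} (together with Neumann, 2011) and finiteness of all moments, applies Rio's (1995) Lindeberg-type CLT for mixing triangular arrays with $a_{i,n}=w_i$, and then identifies the limit variance by an explicit computation with the known autocovariance $\cov(Y_0,Y_h)=a^h b_0/((1-a)^2(1+a))$, arriving at $\sigma^2=b_0/(1-a)^3$. You instead exploit the martingale-difference structure $\eta_t=Y_t-\lambda_t$, pass through the AR(1)/MA($\infty$) representation and a Beveridge--Nelson decomposition, kill the remainder by summation by parts and crude variance bounds, and apply a martingale CLT, with the variance $\mu/(1-a)^2=b_0/(1-a)^3$ emerging cleanly from $\langle M\rangle_n\to a\mu+b_0=\mu$. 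Your argument is more elementary and self-contained: it needs no mixing at all, only (two-sided) stationarity, the uncorrelatedness of the $\eta_t$, and finite fourth moments of $Y_t$ (for Lindeberg), whereas the paper's proof deliberately showcases the mixing machinery that is the subject of the paper --- Section~\ref{S5} is there precisely to illustrate its use, and the same mixing bounds are reused in the proof of Proposition~\ref{t2.ex1}. Two small points to make explicit if you write this up: the MA($\infty$) representation requires extending to (or starting from) the two-sided stationary version, which exists by Theorem~\ref{T2.1}(ii), and the weights $w_t=w_{t,n}$ form a triangular array, so the martingale CLT must be cited in its array form (as your reference to Hall--Heyde indeed permits).
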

\bigskip

We show that the test statistic is asymptotically unbounded for the special case
of a linear trend component in the intensity function. Other situations such as general
polynomial trends can be treated similarly.
\bigskip

 \begin{prop}\label{t2.ex1}
Suppose that $Y_0,\dots, Y_n$ is a stretch of observations of a nonstationary INARCH(1) with constant coefficients and trend such that
$$
\lambda_t=aY_{t-1}+b_0+ b_1 t\quad\text{with } a\in(0,1),~b_0\geq 0 \text{ and } b_1>0,\quad t\in\N_0
$$
and $\lambda_0$ has a  finite absolute fourth moment.  Then, for any $K>0$
$$
P(\widehat{\theta}_1>K)\ninfty 1.
$$
\end{prop}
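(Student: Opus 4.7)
The plan is to decompose $\widehat{\theta}_1 = \sum_{t=1}^n w_t Y_t$ into its expectation plus centered noise, show that the mean grows at rate $n^{3/2}$ while the standard deviation grows only at rate $n^{1/2}$, and then conclude by Chebyshev's inequality.

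First, I would determine the asymptotics of $\mu_t := \E Y_t = \E \lambda_t$. From the recursion $\mu_t = a\mu_{t-1} + b_0 + b_1 t$, which follows directly from $\lambda_t = aY_{t-1} + b_0 + b_1 t$, one obtains the explicit solution $\mu_t = \alpha t + \beta + (\mu_0 - \beta)a^t$ with $\alpha = b_1/(1-a) > 0$ and some constant $\beta$ depending on $a,b_0,b_1$. Using the elementary identities $\sum_{t=1}^n w_t = 0$, $\sum_{t=1}^n w_t^2 = 1$, and the direct computation $\sum_{t=1}^n w_t\, t = D_n := \sqrt{n(n^2-1)/12}$, together with the crude bound $\big|\sum_t w_t a^t\big| \leq (\max_t|w_t|)\,a/(1-a) = O(n^{-1/2})$, this yields
\[
\E \widehat{\theta}_1 \,=\, \alpha D_n \,+\, O(n^{-1/2}) \,\sim\, \frac{b_1}{1-a}\cdot \frac{n^{3/2}}{\sqrt{12}}.
\]

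For the variance, set $v_t := \var(Y_t)$. A conditional-variance decomposition gives $v_t = \mu_t + a^2 v_{t-1}$, hence $v_t = O(t)$. Moreover, since $M_t := Y_t - \lambda_t$ is an $\F_t$-martingale difference, iterating $Y_t = aY_{t-1} + b_0 + b_1 t + M_t$ yields $\cov(Y_s,Y_t) = a^{t-s} v_s$ for $s \leq t$. Then
\[
\var(\widehat{\theta}_1) \,=\, \sum_{s=1}^n w_s^2 v_s \,+\, 2\sum_{1\leq s<t\leq n} w_s w_t\, a^{t-s} v_s.
\]
The diagonal term is bounded by $(\max_s v_s)\sum_s w_s^2 = O(n)$. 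For the off-diagonal term, writing it as $\sum_{s} w_s v_s \sum_{t>s} w_t a^{t-s}$ and using $\max_t|w_t| = O(n^{-1/2})$, one obtains the bound $(\max_t|w_t|)^2\,a/(1-a)\,\sum_s v_s = O(n^{-1})\cdot O(n^2) = O(n)$. Hence $\var(\widehat{\theta}_1) = O(n)$, and Chebyshev's inequality gives, for any fixed $K>0$ and all $n$ large enough that $\E\widehat{\theta}_1 > K+1$,
\[
P(\widehat{\theta}_1 \leq K) \,\leq\, \frac{\var(\widehat{\theta}_1)}{(\E\widehat{\theta}_1 - K)^2} \,=\, \frac{O(n)}{\Theta(n^3)} \,\longrightarrow\, 0.
\]

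The main obstacle is the variance bound: since each marginal variance $v_t$ is already of order $t$, a naive estimate of the double sum would be of order $n^2$ or larger, which would swamp the mean. The rescue is the geometric covariance decay $\cov(Y_s,Y_t) = a^{t-s} v_s$ coming from the INARCH(1) autoregressive structure with $a<1$; combined with the uniform bound $\max_t|w_t| = O(n^{-1/2})$ produced by the orthogonal centering of the regressors, this is precisely what keeps $\var(\widehat{\theta}_1)$ at order $n$. The finite fourth moment assumption on $\lambda_0$ is more than enough to justify the second-moment calculations performed above.
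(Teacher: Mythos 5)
Your proof is correct, and its key step differs from the paper's. Both arguments use the same decomposition $\widehat\theta_1=\sum_t w_t(Y_t-\E Y_t)+\sum_t w_t \E Y_t$ and the same computation showing the deterministic part grows like $\tfrac{b_1}{1-a}n^{3/2}/\sqrt{12}$. Where you diverge is in controlling the centered part: the paper invokes its own mixing machinery --- the exponential $\beta$-mixing of Corollary~\ref{C3.2} (with $Z_t=b_0+b_1t$), a covariance inequality for strongly mixing sequences, and a recursion showing $\E(Y_s-\E Y_s)^4=O(s^2)$ --- which is why the finite fourth moment of $\lambda_0$ is assumed; you instead exploit the exact linear structure, noting that $M_t=Y_t-\lambda_t$ is a martingale difference so that $\cov(Y_s,Y_t)=a^{t-s}v_s$ with $v_t=\mu_t+a^2v_{t-1}=O(t)$, and combine this with $\max_t|w_t|=O(n^{-1/2})$ to get $\var(\widehat\theta_1)=O(n)$ and conclude by Chebyshev. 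Your route is more elementary and needs only second moments (the fourth-moment hypothesis and the mixing results are not used at all), and it yields an explicit $O(n)$ variance bound rather than the mere $o(n^3)$ the paper needs. What the paper's route buys is robustness and illustration: the mixing-plus-moment argument does not rely on the exact AR(1)-type covariance identity, so it extends to settings (e.g.\ INGARCH(1,1) or nonlinear intensity functions) where closed-form covariances are unavailable, and it showcases the intended application of the paper's main mixing theorems, which is the stated purpose of Section~\ref{S5}.
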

\bigskip

Hence, a test rejecting the null if 
$$
\widehat{\theta}_1/\sigma>z_{1-\alpha}
$$
is asymptotically of size $\alpha$ and consistent. Here, $z_{1-\alpha}$ denotes the
$(1-\alpha)$ quantile of $\mathcal N(0,1)$. In practice, $\sigma$ is unknown and has
to be estimated consistently. For our simulations and the data example
presented below, we used the corresponding OLS-estimators  $\widehat a$ and $\widehat b_0$
to obtain $\widehat \sigma^2=\widehat b_0/(1-\widehat a)^3$.
More precisely, we considered the model stated in Proposition~\ref{t2.ex1} and calculated
	\begin{equation}\label{eq.ols}
		\begin{aligned}
	\begin{pmatrix}
	\widehat a\\
	\widehat b_0\\
	\widehat b_1
	\end{pmatrix}=(X^TX)^{-1}\, X^TY\qquad \text{with}\quad
 Y=	\begin{pmatrix}
 	Y_1\\
 	\vdots\\
 	Y_n
 \end{pmatrix}\qquad \text{and}\quad
 X=	\begin{pmatrix}
Y_0	&1&1\\
	\vdots&	\vdots&	\vdots\\
Y_{n-1}&1&n
\end{pmatrix}
\end{aligned}
\end{equation}
\begin{lem}\label{l.ols}
In the situation of Proposition~\ref{t1.ex1} the OLS estimators of $a$ and $b_0$ are consistent.
\end{lem}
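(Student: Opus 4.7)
The plan is to write the OLS residual as $\widehat\beta-\beta^\ast=(X^TX)^{-1}X^T\eta$, where $\beta^\ast=(a,b_0,0)^T$ and $\eta_t=Y_t-\lambda_t$ is the martingale innovation satisfying $\E[\eta_t^2\mid\F_{t-1}]=\lambda_t$. Since the three columns of $X$ have different orders of magnitude in $\ell_2$, I would introduce the diagonal scaling $D_n=\mathrm{diag}(\sqrt n,\sqrt n,n^{3/2})$ and rewrite
\[
\widehat\beta-\beta^\ast \,=\, D_n^{-1}\bigl(D_n^{-1}X^TXD_n^{-1}\bigr)^{-1}D_n^{-1}X^T\eta.
\]
Consistency of $\widehat a$ and $\widehat b_0$ then follows from $D_n^{-1}\to 0$ once I show that the middle matrix converges to an invertible $\Sigma$ and that $D_n^{-1}X^T\eta$ is stochastically bounded.

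The first step is to prove $D_n^{-1}X^TXD_n^{-1}\to\Sigma$ almost surely. The entries $n^{-1}\sum_{t=1}^n Y_{t-1}^2$ and $n^{-1}\sum_{t=1}^n Y_{t-1}$ converge to $\E[Y_0^2]$ and $\mu_Y=b_0/(1-a)$ by Birkhoff's theorem, using the ergodicity of the stationary $((Y_t,\lambda_t))_{t\in\Z}$ supplied by Theorem~\ref{T2.1}(ii); the deterministic cells $n^{-2}\sum t$ and $n^{-3}\sum t^2$ tend to $1/2$ and $1/3$. For the mixed entry $n^{-2}\sum_{t=1}^n tY_{t-1}$ I would use Abel summation,
\[
\sum_{t=1}^n t(Y_{t-1}-\mu_Y) \,=\, nS_n \,-\, \sum_{t=1}^{n-1}S_t,\qquad S_t \,=\, \sum_{s=1}^t(Y_{s-1}-\mu_Y),
\]
together with the ergodic limit $S_t/t\to 0$ a.s.\ to deduce that both pieces are $o(n^2)$, whence $n^{-2}\sum_{t=1}^n tY_{t-1}\to\mu_Y/2$. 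Invertibility of $\Sigma$ will be obtained from the identity
\[
v^T\Sigma v \,=\, \int_0^1 \E\bigl[(v_1 Y_0+v_2+v_3 s)^2\bigr]\,ds,
\]
which is strictly positive for $v\neq 0$ whenever $Y_0$ is non-degenerate, i.e.\ when $b_0>0$ (the case $b_0=0$ gives the trivial stationary law $\delta_0$ and may be excluded).

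The second step is $D_n^{-1}X^T\eta=O_{\P}(1)$, obtained by Chebyshev applied to the three coordinates, each of which is a normalised martingale transform. Using orthogonality together with $\E[\eta_t^2\mid\F_{t-1}]=\lambda_t$,
\[
\E\Bigl[\Bigl(n^{-1/2}\sum_{t=1}^n Y_{t-1}\eta_t\Bigr)^{\!2}\Bigr] \,=\, n^{-1}\sum_{t=1}^n\E[Y_{t-1}^2\lambda_t],\qquad \E\Bigl[\Bigl(n^{-3/2}\sum_{t=1}^n t\eta_t\Bigr)^{\!2}\Bigr] \,=\, n^{-3}\sum_{t=1}^n t^2\E[\lambda_t],
\]
and analogously for the constant-column coordinate. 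These remain bounded once one checks, via the stationary recursions for Poisson-INARCH(1) moments with $a<1$, that $\E[Y_0^3]<\infty$. Combining the two steps yields
\[
\widehat\beta-\beta^\ast \,=\, D_n^{-1}\cdot O_{\P}(1)\cdot O_{\P}(1) \,=\, o_{\P}(1),
\]
so that in particular $\widehat a\to a$ and $\widehat b_0\to b_0$ in probability.

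The principal obstacle is the off-diagonal term $n^{-2}\sum_{t=1}^n tY_{t-1}$: it is not a plain ergodic average but a Cesaro sum weighted by the increasing sequence $t$, and handling it requires the Abel-summation trick above to convert the ergodic control of $S_n/n$ into the desired limit. The remaining ingredients---ergodicity from Theorem~\ref{T2.1}(ii), $L^2$ bounds for martingale transforms, and finiteness of low-order stationary moments---are standard.
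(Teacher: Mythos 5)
Your proposal is correct and takes essentially the same route as the paper: both write $\widehat\beta-\beta^\ast=(X^TX)^{-1}X^T\eta$, rescale the design matrix (your symmetric $D_n$ versus the paper's asymmetric $N=\mathrm{diag}(n^{-1},n^{-1},n^{-2})$ and $M=\mathrm{diag}(1,1,n^{-1})$) to obtain the very same invertible limit matrix, and dispose of the scaled noise term $X^T\eta$ via second-moment (martingale) bounds. The differences are cosmetic---Abel summation plus ergodicity where the paper invokes exponentially decaying autocovariances, a quadratic-form argument where the paper computes a determinant, and your explicit exclusion of the degenerate case $b_0=0$, which the paper leaves implicit.
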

\medskip
With similar arguments as in the proof of Lemma~5.1 it can be shown that the OLS estimators of $a$ and $b_0$ are also consistent under the alternative described in Proposition~\ref{t2.ex1}.

{
\rem{We stick to the INARCH(1) model although a  generalization of Propositions 5.1 and 5.2 to INGARCH(1,1) models is possible.
In the latter case, the naive OLS estimation is no longer feasible since the intensity process is unobserved.
Of course, there are consistent estimators for stationary INGARCH(1,1) processes as well.
However, their behavior under the alternative would have to be investigated, too. This goes far beyond the scope of the paper.} }
\subsection{Numerical study}
Next, we investigate the finite sample behavior of the proposed test. Considering low and moderate levels of persistence ($a=0.2$ and $a=0.5$) we increase the effect of a linear trend from $b_1=0$ (null hypothesis) to $b_1=0.1$ holding the intercept fixed ($b_0=1$). We vary the sample size $n=50,\; 100,\;200$ for $a=0.2$ and $n=  100,\;250,\; 500$ for $a= 0.5$. The results for $\alpha=0.1$  using 5000 Monte Carlo loops are displayed in Figure~\ref{sims}. The power properties of our test are very convincing however it tends to reject a true null too often in small samples. In particular, note that for $a=0.5$  increasing the sample size from 250 to 500 improves the performance of the test under the null but barely influences the behavior under the alternative (the solid and dashed line in Figure~\ref{sims} nearly coincide).
\begin{figure}
	\includegraphics[width=15cm]{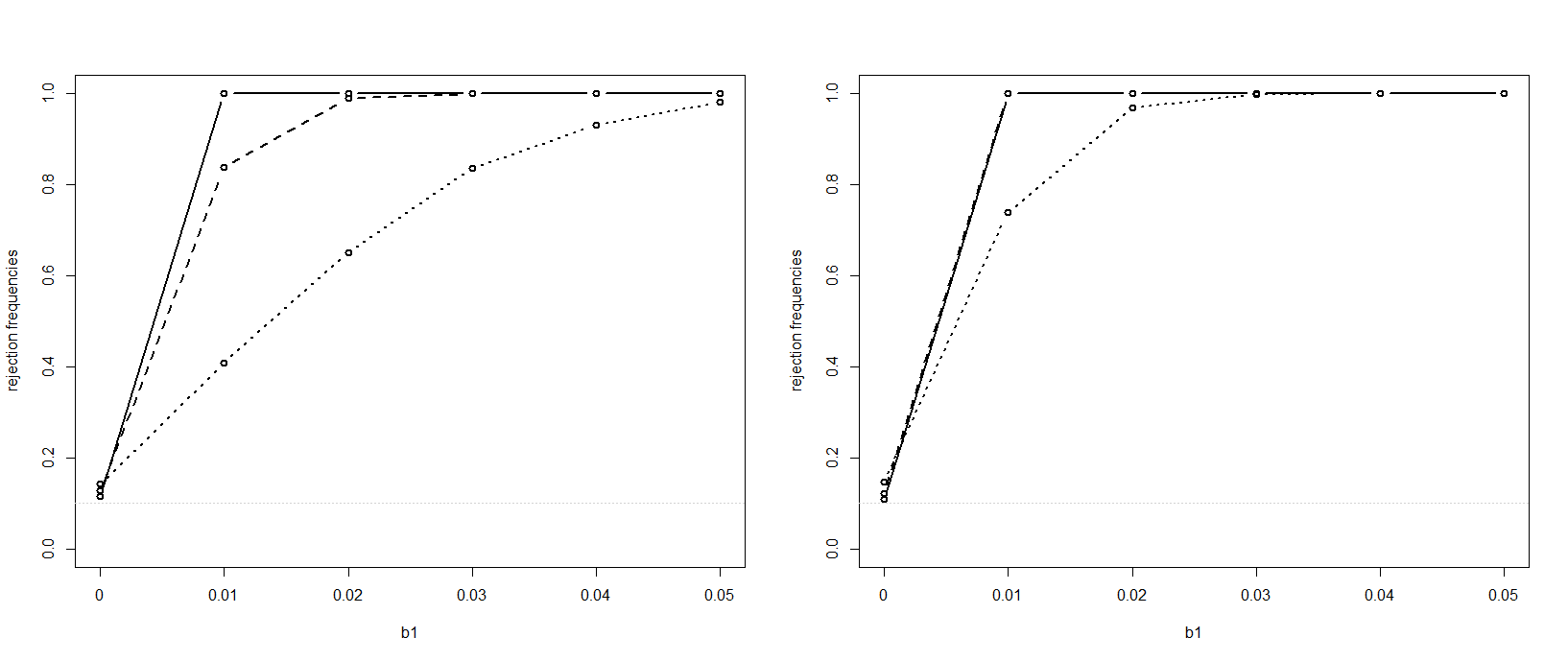}
	\caption{\scriptsize Left: $a=0.2$ and $n=50$ dotted, $n=100$ dashed, $n=200$ solid line; ~
		Right: $a=0.5$ and $n=100$ dotted, $n=250$ dashed, $n=500$ solid line.}\label{sims}
\end{figure}
\subsection{Analysis of COVID-19 data}
We applied our test to investigate daily COVID-19 infection numbers as well as the cases of deaths related to  COVID-19 in France and Germany from July $15$ to September $15$, 2020 using a data set published by the \citet{ECDPC}; see Figure~\ref{covid}. \textcolor{red}{Observing a weekly periodicity in the data, we pre-processed the data eliminating an estimated seasonal component.} Obviously, no test is required to observe an increasing trend in the daily infection numbers in France as well as in Germany. Our test clearly rejects the null in both cases (France: $\widehat\theta_1/\widehat\sigma=349693.30$, Germany: $\widehat\theta_1/\widehat\sigma=96.39$). However, the situation changes if we look at the cases of deaths. Again, the null is rejected for France ($\widehat\theta_1/\widehat\sigma=7.50$)  at any reasonable level. Contrary, evaluating the test statistic based on the number of deaths in Germany that are related to COVID-19, we obtain $\widehat\theta_1/\widehat\sigma=-0.11$, that is, the null hypothesis of no trend is not rejected at any reasonable level. 
We also studied a shift of the window of observation of 16 days, i.e.~we considered the period from  August 1 to September 30. Then, unfortunately, the null is rejected for both countries for the number of daily infections as well as for the COVID-19 related number of deaths. 
\begin{figure}
\includegraphics[width=15cm]{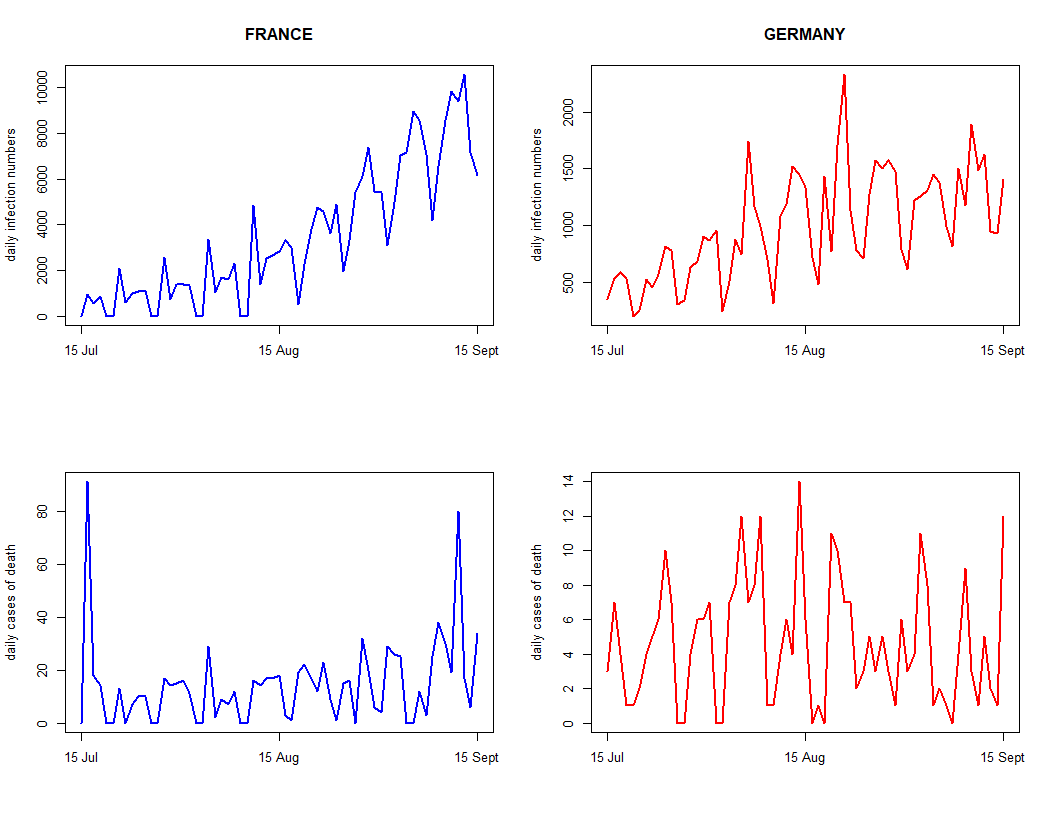}
\caption{\scriptsize Daily COVID-19 infection numbers (top) as well as the cases of deaths (bottom) related to  COVID-19 in France (left, blue) and Germany (right, red) from July $15$ to September $15$, 2020.}\label{covid}
\end{figure}

\bigskip

\section{Proofs }
\label{S6}

\begin{proof}[Proof of Theorem~\ref{T2.1}]
	The proof of assertion (i) is given in the running text of Section~\ref{S2}.
	
	To prove (ii), we first identify a function~$g$, which will satisfy the
	required equality (\ref{2.11}). We consider backward iterations $g^{[k]}$, 
	where (with $x_i=(y_i,z_i)$) $g^{[1]}(x_1,\lambda_1):=f(y_1,\lambda_1,z_1)$
	and, for $k\geq 2$, $g^{[k]}(x_1,\ldots,x_k,\lambda_k):=f(y_1,g^{[k-1]}(x_2,\ldots,x_k,\lambda_k),z_1)$.
Using the  idea of iterations as  in exercise 46 of \cite{Dou18}, we consider and we set precise approximations to $\lambda_t$,
\bd
	\lambda_t^{[k]} \,=\, g^{[k]}(X_{t-1},\ldots,X_{t-k},\bar{\lambda}),
	\ed
	where $\bar{\lambda}=\E \lambda_0$.
	It follows from (A2) that
	\bd
	\E\left[ d(\lambda_t, \lambda_t^{[k]}) \right]
	\,\leq\, L_2^k \; \E\left[ d(\lambda_{t-k},\bar{\lambda}) \right]
	\,=\, L_2^k \; \E\left[ d(\lambda_0,\bar{\lambda}) \right],
	\ed
	which implies that
	\bd
	d( \lambda_t, \lambda_t^{[k]} ) \,\stackrel{\P}{\longrightarrow}\, 0 \qquad \mbox{ as } k\to\infty.
	\ed
	This implies
	\bd
	d_{TV}\left( \mbox{Pois}(\lambda_t), \mbox{Pois}(\lambda_t^{[k]}) \right) \,\stackrel{\P}{\longrightarrow}\, 0
	\ed
	and, therefore,
	\bd
	\lambda_t \,-\, \lambda_t^{[k]} \,\stackrel{\P}{\longrightarrow}\, 0.
	\ed
	By taking an appropriate subsequence $(k_n)_{n\in\N}$ of $\N$ we even obtain
	\be
	\label{pt21.1}
	\lambda_t \,-\, \lambda_t^{[k_n]} \,\stackrel{a.s.}{\longrightarrow}\, 0.
\end{equation}
In order to obtain a well-defined function~$g$, we define, for any sequence $x_1,x_2,\ldots$,
\bd
g(x_1,x_2,\ldots) \,=\, \limsup_{n\to\infty} g^{[k_n]}(x_1,\ldots,x_{k_n},\bar{\lambda}).
\ed
As a limit of the measurable functions $g^{[k_n]}$, $g$ is also $(\sigma(\mathcal{Z})-\mathcal{B})$-measurable.
{}From (\ref{pt21.1}) we conclude that
\bd
\lambda_t \,=\, \lim_{n\to\infty} \lambda_t^{[k_n]} \,=\, g(X_{t-1},X_{t-2},\ldots)
\ed
holds with probability~1, as required.

Since absolute regularity of the process $(X_t)_{t\in\Z}$ implies strong mixing (see e.g.~\citet[p.~20]{Dou94})
we conclude from Remark~2.6 on page~50 in combination with Proposition~2.8 on page~51 in \citet{Bra07}
that any stationary version of this process is also ergodic.
Finally, we conclude from (\ref{pt21.1})
by proposition~2.10(ii) in \citet[p.~54]{Bra07} that also
the process $((Y_t,\lambda_t,Z_t))_{t\in\Z}$ is ergodic.
\end{proof}
\bigskip

\begin{proof}[Proof of Corollary~\ref{C3.1}]
	We choose the distance~$d$ as $d(\lambda,\lambda')=|\lambda-\lambda'|$ and verify
	that conditions (A1) to (A3) are fulfilled.
	\begin{itemize}
		\item[{\em (A1):}]
		We construct the coupling such that $\wtz_t=\wtz_t'$. Then
		\bd
		|\wtl_{t+1}-\wtl_{t+1}'| \; \1(\wty_t=\wty_t') \,\leq\, b_t \; |\wtl_t-\wtl_t'|.
		\ed
		Therefore, (A1) is fulfilled with $L_1=\sup\{b_t\colon\; t\in\N_0\}$.
		\item[{\em (A2):}]
		We couple the covariates such that $\widetilde{Z}_t=\widetilde{Z}_t'$.
		The count variables are coupled in such a way that $\wty_t\geq\wty_t'$ if $\wtl_t\geq\wtl_t'$
		and $\wty_t\leq\wty_t'$ if $\wtl_t\leq\wtl_t'$. 
		Such a coupling is necessary and sufficient for
		$\widetilde{\E}(|\wty_t-\wty_t'|\big|\wtl_t,\wtl_t')=|\wtl_t-\wtl_t'|$;
		otherwise the term on the left-hand side will be larger.
		Note that the maximal coupling   but also the simple ``additive coupling''  share  this property.
		The latter can be constructed as follows.	If $\wtl_t'\leq\wtl_t$ then $\wty_t=\wty_t'+W_t$, where $W_t\sim\mbox{Pois}(\wtl_t-\wtl_t')$
		is independent of $\wty_t'$.
		Vice versa, if $\wtl_t'>\wtl_t$ then $\wty_t'=\wty_t+W_t$, where $W_t\sim\mbox{Pois}(\wtl_t'-\wtl_t)$
		is independent of $\wty_t$.
		Then
		\bean
		\hspace{1.5cm}\widetilde{\E}\left( |\wtl_{t+1}-\wtl_{t+1}'| \Big| \wtl_t,\wtl_t' \right)
		  =  a_t \; \widetilde{\E}\left( |\wty_t-\wty_t'| \Big| \wtl_t,\wtl_t' \right)
		\,+\, b_t\; |\wtl_t-\wtl_t'|  
	  =   (a_t+b_t)\; |\wtl_t-\wtl_t'|,
		\eean
		that is, (A2) is fulfilled with $L_2=\sup\{a_t+b_t\colon\; t\in\N_0\}$.
	\end{itemize}
	It follows from (\ref{3.1}) that $\E \lambda_{k+1}\leq (a_k+b_k)\E \lambda_k + \E Z_k$,
	which implies that
	\bd
	\E \lambda_k \,\leq\, \E \lambda_0 \,+\, \frac{1}{1-L_2} \;\sup\{\E Z_t\colon\; t\in\N_0\}.
	\ed 
\end{proof}
\bigskip

\begin{proof}[Proof of Corollary~\ref{C3.2}]
	\begin{itemize}
		\item[{\em (A1):}]
		We construct the coupling such that $\wtz_t=\wtz_t'$.
		Since $|\sqrt{\lambda+c}-\sqrt{\lambda'+c}|\leq |\sqrt{\lambda}-\sqrt{\lambda'}|$
		holds for all $\lambda,\lambda',c\geq 0$ we obtain that
		\bd
		\left| \sqrt{\wtl_{t+1}} - \sqrt{\wtl_{t+1}'} \right| \; \1( \wty_t = \wty_t' )
		\,\leq\, \sqrt{b_t} \; \left| \sqrt{\wtl_t} - \sqrt{\wtl_t'} \right|
		\ed
		On the other hand, the inequality
		$|\wtl_{t+1}-\wtl_{t+1}'|\; \1( \wty_t = \wty_t' )\leq b\;|\wtl_t-\wtl_t'|$ is obvious.
		Hence, (A1) is fulfilled with $L_1=\sup\{\sqrt{b_t}\colon\, t\in\N_0\}$.
		\item[{\em (A2):}]
		We couple the covariates such that $\widetilde{Z}_t=\widetilde{Z}_t'$.
		For the count variables, we use an additive coupling as described in the proof of Corollary~\ref{C3.1}(A2).
		This yields in particular that $\wty_t\geq\wty_t'$ if $\wtl_t\geq\wtl_t'$
		and $\wty_t\leq\wty_t'$ if $\wtl_t\leq\wtl_t'$.
		We will show that, for some $\rho<1$, 
		\be
		\label{pc32.0}
		\widetilde{\E}\big( d(\wtl_{t+1}, \wtl_{t+1}') \mid \wtl_t, \wtl_t' \big)
		\,\leq\, \rho \; d\left(\wtl_t, \wtl_t' \right),
	\end{equation}
	provided that the constant $M$ in (\ref{3.2}) is chosen appropriately.
	To this end, we distinguish between two cases:
	\begin{itemize}
		\item[Case (i):] $\sqrt{\wtl_t}+\sqrt{\wtl_t'}\leq M$
	\end{itemize}
	Then $d(\wtl_t,\wtl_t')=|\wtl_t-\wtl_t'|/M$ and it follows that
	\bea
	\label{pc32.1}
	\lefteqn{ \widetilde{\E}\left( d(\wtl_{t+1},\wtl_{t+1}') \mid \wtl_t,\wtl_t' \right) } \nonumber \\
	& \leq & \widetilde{\E}\left( |\wtl_{t+1}-\wtl_{t+1}'|/M \mid \wtl_t,\wtl_t' \right) \nonumber \\
	& = & (a_t+b_t) \; |\wtl_t-\wtl_t'|/M \,=\, (a_t+b_t) \; d(\wtl_t, \wtl_t').
	\eea
	
	\begin{itemize}
		\item[Case (ii):] $\sqrt{\wtl_t}+\sqrt{\wtl_t'}> M$
	\end{itemize}
	In this case, $d(\wtl_t,\wtl_t')=|\sqrt{\wtl_t}-\sqrt{\wtl_t'}|$.
	We choose $\epsilon>0$ such that $\sup\{\sqrt{a_t+b_t}\colon\; t\in\N_0\}<1-\epsilon$.
	To simplify notation, let $\lambda,\lambda'$ be non-random with $\lambda\geq\lambda'$, $\sqrt{\lambda}+\sqrt{\lambda'}>M$ and let
	$Y=Y'+Z$, where $Y'\sim \mbox{Pois}(\lambda')$ and $Z\sim \mbox{Pois}(\lambda-\lambda')$ are independent.
	Furthermore, we drop the index~$t$ with~$a_t$ and $b_t$.
	Again, we have to distinguish between two cases.
	\begin{itemize}
		\item[a):] $\epsilon\sqrt{\lambda}\geq (1+\epsilon)\sqrt{\lambda'}$
	\end{itemize}
	In this case the proof of (\ref{pc32.0}) is almost trivial.
	We have
	\bea
	\label{pc32.2}
	\lefteqn{ \E\left[ \sqrt{aY+b\lambda} \,-\, \sqrt{aY'+b\lambda'} \right] } \nonumber \\
	& \leq & \E \sqrt{ aY+b\lambda } \nonumber \\
	& \leq & \sqrt{a+b} \; \sqrt{\lambda} \nonumber \\
	& = & \sqrt{a+b} \; (1+\epsilon) \; \left(1-\frac{\epsilon}{1+\epsilon}\right) \; \sqrt{\lambda} \nonumber \\
	& \leq & \sqrt{a+b} \; (1+\epsilon) \; (\sqrt{\lambda}-\sqrt{\lambda'}) \nonumber \\
	& \leq & \frac{\sqrt{a+b}}{1-\epsilon} \; |\sqrt{\lambda}-\sqrt{\lambda'}|.
	\eea
	Here, the second inequality follows by Jensen's inequality since $x\mapsto \sqrt{x}$ is a concave function.
	
	\begin{itemize}
		\item[b):] $\epsilon\sqrt{\lambda}< (1+\epsilon)\sqrt{\lambda'}$
	\end{itemize}
	This case requires more effort. We split up
	\bea
	\label{pc32.11}
	\lefteqn{ \E\left[ \sqrt{aY+b\lambda} \,-\, \sqrt{aY'+b\lambda'} \right] } \nonumber \\
	& \leq & \E\left[ \left(\sqrt{aY+b\lambda} \,-\, \sqrt{aY'+b\lambda'}\right)) \;
	\1\left( \sqrt{aY+b\lambda}+\sqrt{aY'+b\lambda'} \geq (1-\epsilon)\;\sqrt{a+b}(\sqrt{\lambda}+\sqrt{\lambda'}) \right) 
	\right] \nonumber \\
	& & {} + \E\left[ \left(\sqrt{aY+b\lambda} \,-\, \sqrt{aY'+b\lambda'}\right) \;
	\1\left( \sqrt{aY+b\lambda} < (1-\epsilon)\;\sqrt{(a+b)\lambda} \mbox{ and }
	\sqrt{aY'+b\lambda'} \geq (1-\epsilon)\;\sqrt{(a+b)\lambda'} \right) \right] \nonumber \\
	& & {} + \E\left[ \left(\sqrt{aY+b\lambda} \,-\, \sqrt{aY'+b\lambda'}\right) \;
	\1\left( \sqrt{aY'+b\lambda'} < (1-\epsilon)\;\sqrt{(a+b)\lambda'} \right) \right] \nonumber \\
	& =: & T_1 \,+\, T_2 \,+\, T_3,
	\eea
	say.
	Then
	\bea
	\label{pc32.12}
	T_1 & = & \E\left[ \frac{aY+b\lambda-aY'-b\lambda'}{ \sqrt{aY+b\lambda}+\sqrt{aY'+b\lambda'} } \;
	\1\left( \sqrt{aY+b\lambda}+\sqrt{aY'+b\lambda'} \geq (1-\epsilon)\;\sqrt{a+b}(\sqrt{\lambda}+\sqrt{\lambda'}) \right) 
	\right] \nonumber \\
	& \leq & \E\left[ \frac{aY+b\lambda-aY'-b\lambda'}{ (1-\epsilon) \; \sqrt{a+b} \; (\sqrt{\lambda}+\sqrt{\lambda'}) } \right]
	\nonumber \\
	& = & \frac{ \sqrt{a+b} }{ 1-\epsilon } \; \frac{ \lambda-\lambda' }{ \sqrt{\lambda}+\sqrt{\lambda'} }
	\,=\, \frac{ \sqrt{a+b} }{ 1-\epsilon } \; \left| \sqrt{\lambda} \,-\, \sqrt{\lambda'} \right|.
	\eea
	Since $\sqrt{aY+b\lambda}<(1-\epsilon)\sqrt{(a+b)\lambda}$ implies that $Y< (1-\epsilon)^2\lambda$,
	and therefore $|Y-\lambda|> (1-(1-\epsilon)^2)\lambda$, we obtain that
	\bea
	\label{pc32.13}
	T_2 & \leq & \E\left[ (1-\epsilon) \; \sqrt{a+b} \; \left( \sqrt{\lambda} - \sqrt{\lambda'} \right) \;
	\1\left( |Y-\lambda| \geq (1-(1-\epsilon)^2) \; \lambda \right) \right] \nonumber \\
	& \leq & (1-\epsilon) \; \sqrt{a+b} \; \left( \sqrt{\lambda} - \sqrt{\lambda'} \right)
	\frac{1}{ (1-(1-\epsilon)^2)^2 \; \lambda } \nonumber \\
	& \leq & \left| \sqrt{\lambda} - \sqrt{\lambda'} \right| \;
	\frac{ (1-\epsilon) \; \sqrt{a+b} }{ (1-(1-\epsilon)^2)^2 } \; \frac{4}{M^2}.
	\eea
	Note that the last inequality follows from $2\sqrt{\lambda}\geq \sqrt{\lambda}+\sqrt{\lambda'}>M$.
	To estimate $T_3$, we use the simple estimates
	\bean
	\sqrt{aY+b\lambda} \,-\, \sqrt{aY'+b\lambda'}
	& = & \sqrt{aY+b\lambda} \,-\, \sqrt{aY'+b\lambda} \,+\, \sqrt{aY'+b\lambda} \,-\, \sqrt{aY'+b\lambda'} \\
	& \leq & \sqrt{a} \; \left( \sqrt{Y} - \sqrt{Y'} \right)
	\,+\, \sqrt{b} \; \left( \sqrt{\lambda} \,-\, \sqrt{\lambda'} \right)
	\eean
	and $\sqrt{Y}-\sqrt{Y'}\leq Y-Y'$, as well as the fact that
	$\sqrt{aY'+b\lambda'}<(1-\epsilon)\sqrt{(a+b)\lambda'}$ implies
	that $|Y'-\lambda'|>(1-(1-\epsilon^2))\lambda'$. This leads to
	\bean
	T_3 & \leq & \sqrt{a} \; \E\left[ \left(\sqrt{Y}-\sqrt{Y'}\right) \;
	\1(\sqrt{aY'+b\lambda'}<(1-\epsilon)\sqrt{(a+b)\lambda'}) \right] \\
	& & {} \,+\, \sqrt{b} \; \E\left[ \left(\sqrt{\lambda}-\sqrt{\lambda'}\right) \;
	\1(\sqrt{aY'+b\lambda'}<(1-\epsilon)\sqrt{(a+b)\lambda'}) \right] \\
	& \leq & \sqrt{a} \; \E\left[ (Y-Y') \; \1(|Y'-\lambda'|>(1-(1-\epsilon^2))\lambda') \right] \\
	& & {} \,+\, \sqrt{b} \; \left(\sqrt{\lambda}-\sqrt{\lambda'}\right) \; \P(|Y'-\lambda'|>(1-(1-\epsilon^2))\lambda') \\
	& \leq & \left( \sqrt{a} \; (\lambda-\lambda') \,+\, \sqrt{b} \; (\sqrt{\lambda}-\sqrt{\lambda'}) \right)
	\; \frac{ \E(Y'-\lambda')^2 }{ (1-(1-\epsilon^2))^2 \; {\lambda'}^2 }.
	\eean
	{}From $\epsilon\sqrt{\lambda} < (1+\epsilon)\sqrt{\lambda'}$ we obtain that
	$M\leq \sqrt{\lambda}+\sqrt{\lambda'}\leq \frac{1+2\epsilon}{\epsilon} \sqrt{\lambda'}$,
	which leads to
	\be
	\label{pc32.14}
	T_3 \,\leq\, \left| \sqrt{\lambda} \,-\, \sqrt{\lambda'} \right| \; 
	\frac{1}{(1-(1-\epsilon^2))^2} \; \left( \frac{1+2\epsilon}{\epsilon} \right)^2
	\; \left( \frac{\sqrt{a}}{M} \,+\, \frac{\sqrt{b}}{M^2} \right).
\end{equation}

To sum up, we conclude from (\ref{pc32.1}) to (\ref{pc32.14}) that (\ref{pc32.0})
is fulfilled for
\bd
\rho \,=\, \frac{ \sqrt{a+b} }{ 1-\epsilon } \,+\,
\frac{ (1-\epsilon) \; \sqrt{a+b} }{ (1-(1-\epsilon)^2)^2 } \; \frac{4}{M^2} \,+\,
\frac{1}{(1-(1-\epsilon^2))^2} \; \left( \frac{1+2\epsilon}{\epsilon} \right)^2
\; \left( \frac{\sqrt{a}}{M} \,+\, \frac{\sqrt{b}}{M^2} \right).
\ed
Choosing now the constant $M$ sufficiently large we obtain that $\rho<1$,
as required.
\item[{\em (A3):}]
Part (i) of (A3) is fulfilled by assumption.

Assume that the processes $((\wty_t,\wtl_t,\wtz_t))_{t\in\N_0}$ and $((\wty_t',\wtl_t',\wtz_t'))_{t\in\N_0}$
are independent copies of the original process $((Y_t,\lambda_t,Z_t))_{t\in\N_0}$.
We have that
\bea
\label{pc32.21}
\lefteqn{ \left| \sqrt{\wtl_{t+1}} \,-\, \sqrt{\wtl_{t+1}'} \right| } \nonumber \\
& = & \left| \sqrt{ a\wty_t + b\wtl_t + \widetilde{Z}_t } \,-\, \sqrt{ a\wty_t' + b\wtl_t' + \widetilde{Z}_t} 
\,+\, \sqrt{ a\wty_t' + b\wtl_t' + \widetilde{Z}_t } \,-\, \sqrt{ a\wty_t' + b\wtl_t' + \widetilde{Z}_t'} \right| \nonumber \\
& \leq & \left| \sqrt{ a\wty_t + b\wtl_t } \,-\, \sqrt{ a\wty_t' + b\wty_t' } \right| 
\,+\, \left| \sqrt{\widetilde{Z}_t} \,-\, \sqrt{\widetilde{Z}_t'} \right| \nonumber \\
& \leq & \left| \sqrt{ a\wty_t + b\wtl_t } \,-\, \sqrt{ (a+b)\wtl_t } \right| \nonumber \\
& & {} \,+\, \sqrt{a+b} \; \left| \sqrt{\wtl_t} \,-\, \sqrt{\wtl_t'} \right| \nonumber \\
& & {} \,+\, \left| \sqrt{ a\wty_t' + b\wtl_t' } \,-\, \sqrt{ (a+b)\wtl_t' } \right| \nonumber \\
& & {} \,+\, \left| \sqrt{\widetilde{Z}_t} \,-\, \sqrt{\widetilde{Z}_t'} \right| \nonumber \\
& =: & R_{t,1} \,+\, \cdots \,+\, R_{t,4},
\eea
say. 
We obtain that
\bea
\label{pc32.22}
\widetilde{\E}\left( R_{t,1} \mid \wtl_t,\wtl_t' \right)
& = & \widetilde{\E}\left( \frac{ a\; |\wty_t-\wtl_t| }{ \sqrt{a\wty_t+b\wtl_t} \,+\, \sqrt{(a+b)\wtl_t} }
\Big| \wtl_t,\wtl_t' \right) \nonumber \\
& \leq & \frac{ a }{ \sqrt{a+b} } \; \widetilde{\E}\left( |\wty_t-\wtl_t|/\sqrt{\wtl_t} \big| \wtl_t,\wtl_t' \right) \nonumber \\
& \leq & \frac{ a }{ \sqrt{a+b} } \; \sqrt{ \widetilde{\E}\left( (\wty_t-\wtl_t)^2/\wtl_t \big| \wtl_t,\wtl_t' \right) }
\,=\, \frac{ a }{ \sqrt{a+b} }
\eea
and, for the same reason,
\be
\label{pc32.23}
\widetilde{\E}\left( R_{t,3} \mid \wtl_t,\wtl_t' \right) \,\leq\, \frac{ a }{ \sqrt{a+b} }.
\end{equation}
Finally, we have that
\be
\label{pc32.24}
\widetilde{\E}\left( \big| \sqrt{\widetilde{Z}_t} \,-\, \sqrt{\widetilde{Z}_t'} \big| \Big| \wtl_t,\wtl_t' \right)
\,=\, \widetilde{\E} \left| \sqrt{\widetilde{Z}_t} \,-\, \sqrt{\widetilde{Z}_t'} \right| 
\,\leq\, 2 \; \E\left| \sqrt{Z_t} \,-\, \E\sqrt{Z_t} \right|.
\end{equation}
It follows from (\ref{pc32.21}) to (\ref{pc32.24}) that part (ii) of condition (A3) is fulfilled with $L_3=\sqrt{a+b}$ and
$M_0=2\; \sup\{\E|\sqrt{Z_t}-\E\sqrt{Z_t}|\colon\; t\in \N_0\}\,+\,2a/\sqrt{a+b}$.
\end{itemize}
\end{proof}
\bigskip

\begin{proof}[Proof of Proposition~\ref{P3.4}] \ 
\begin{itemize}
\item[(i)]
First of all, note that the process $(V_t)_{t\in\N_0}$ with $V_t=(\log(\lambda_t),\log(Y_t+1),Z_t)$
forms a time-homogeneous Markov chain. Let  $S=\R\times\log(\N)\times\R$ be the state space of this process.

In order to derive a contraction property, we choose the metric
\begin{displaymath}
\Delta\Big( (x,y,z), (x',y',z') \big)
\,=\, \kappa_1 |x-x'| \,+\, \kappa_2 |y-y'| \,+\, |z-z'|,
\end{displaymath}
where $\kappa_1$ and $\kappa_2$ are strictly positive constants such that $|a|\leq\kappa_1$,
$|b|\leq\kappa_2$, and $\kappa:=\kappa_1+\kappa_2<1$.
We show that we can couple two versions of the process $(V_t)_{t\in\N_0}$,
$(\widetilde{V}_t)_{t\in\N_0}$ and $(\widetilde{V}_t')_{t\in\N_0}$, such that
\begin{equation}
\label{pp34.1}
\widetilde{\E} \left( \left. \Delta\big( \widetilde{V}_{t+1}, \widetilde{V}_{t+1}' \big)
\right| \widetilde{V}_t, \widetilde{V}_t' \right)
\,\leq\, \kappa \; \Delta\big( \widetilde{V}_t, \widetilde{V}_t' \big).
\end{equation}
We couple the corresponding covariate processes such that they coincide,
i.e. $\wtz_t=\wtz_t'$ $\forall t\in\N_0$.
Let $v=(x,y,z),v'=(x',y',z')\in S$ be arbitrary.
We assume that $\widetilde{V}_t=v$ and $\widetilde{V}_t'=v'$ and construct
$\widetilde{V}_{t+1}=(\log(\wtl_{t+1}),\log(\wty_{t+1}+1),\wtz_{t+1})$
and $\widetilde{V}_{t+1}'=(\log(\wtl_{t+1}'),\log(\wty_{t+1}'+1),\wtz_{t+1}')$ 
as follows.
According to the model equation (\ref{3.13}) we set
\begin{displaymath}
\log(\wtl_{t+1}) \,=\, d \,+\, a y \,+\, b x \,+\, \widetilde{Z}_t
\end{displaymath}
and
\begin{displaymath}
\log(\wtl_{t+1}') \,=\, d \,+\, a y' \,+\, b x' \,+\, \widetilde{Z}_t'.
\end{displaymath}
Conditioned on $\widetilde{V}_t$ and $\widetilde{V}_t'$, the random variables $\wty_{t+1}$ and $\wty_{t+1}'$
have to follow Poisson distributions with intensities $\wtl_{t+1}$ and $\wtl_{t+1}'$, respectively.
At this point we employ a coupling such that $\wty_{t+1}-\wty_{t+1}'$ has with probability~1
the same sign as $\wtl_{t+1}-\wtl_{t+1}'$. 
This implies in particular that
\begin{eqnarray}
\label{pp34.2}
\lefteqn{ \widetilde{\E} \Big( \big| \log(\wty_{t+1}+1) \,-\, \log(\wty_{t+1}'+1) \big| \Big|
\widetilde{V}_t, \widetilde{V}_t' \Big) } \nonumber \\
& = & \Big| \widetilde{\E} \left( \left. \log(\wty_{t+1}+1) \,-\, \log(\wty_{t+1}'+1) \right|
\widetilde{V}_t, \widetilde{V}_t' \right) \Big| \nonumber \\
& = & \Big| \E\left( \left. \log(Y_{t+1}+1) \right| \lambda_{t+1}=\wtl_{t+1} \right)
\,-\, \E\left( \left. \log(Y_{t+1}+1) \right| \lambda_{t+1}=\wtl_{t+1}' \right) \Big|.
\end{eqnarray}
To estimate the term on the right-hand side of (\ref{pp34.2}), we show that, for $Y^{(\lambda)}\sim\mbox{Pois}(\lambda)$,
\be
\label{pp34.3}
\frac{d}{d\lambda} \left\{ \E \log(Y^{(\lambda)}+1)) \right\} \,\leq\, \frac{1}{\lambda} \qquad \forall \lambda>0.
\end{equation}
To see this, suppose that $Y^{(\lambda)}\sim \mbox{Pois}(\lambda)$ and $Y^{(\epsilon)}\sim \mbox{Pois}(\epsilon)$
are independent. Then
\bean
\lefteqn{ \E\left[ \log(Y^{(\lambda)}+Y^{(\epsilon)}+1) \,-\, \log(Y^{(\lambda)}+1) \right] } \\
& = & e^{-\epsilon} \; \epsilon \; \sum_{k=0}^\infty [\log(k+2) - \log(k+1)] \; e^{-\lambda} \frac{\lambda^k}{k!} \\
& & {} \,+\, e^{-\epsilon} \; \sum_{l=2}^\infty \frac{\epsilon^l}{l!} \;
\sum_{k=0}^\infty [\log(k+l+1) - \log(k+1)] \; e^{-\lambda} \frac{\lambda^k}{k!} \\
& =: & T_{\epsilon,1} \,+\, T_{\epsilon,2},
\eean
say.
Since $\log(k+l+1)-\log(k+1)=\int_{k+1}^{k+l+1} \frac{1}{u} \, du\leq \frac{l}{k+1}$ we obtain that
\bd
0 \,\leq\, T_{\epsilon,1} \,=\, e^{-\epsilon} \; \epsilon \sum_{k=0}^\infty \frac{1}{k+1} \; e^{-\lambda}
\frac{\lambda^k}{k!}
\,\leq\, \epsilon \; \frac{1}{\lambda} \; \sum_{k=0}^\infty e^{-\lambda} \frac{\lambda^{k+1}}{(k+1)!}
\,=\, \frac{\epsilon}{\lambda} \; \P( Y_\lambda \neq 0 )
\ed
as well as
\bd
0 \,\leq\, T_{\epsilon,2} \,\leq\, \frac{1}{\lambda} \; \sum_{l=2}^\infty \frac{\epsilon^\lambda}{l!}
\; \sum_{k=0}^\infty e^{-\lambda} \frac{\lambda^{k+1}}{(k+1)!}
\,\leq\, \frac{1}{\lambda} \; \sum_{l=2}^\infty \epsilon^l \,=\, \frac{1}{\lambda} \; \frac{\epsilon^2}{1-\epsilon}.
\ed
Therefore,
\bd
\frac{d}{d\lambda} \E[\log( Y^{(\lambda)}+1)]
\,=\, \lim_{\epsilon\to 0} \frac{T_{\epsilon,1}}{\epsilon} \,=\, \frac{\P(Y^{(\lambda)}\neq 0)}{\lambda}
\,\leq\, \frac{1}{\lambda},
\ed
that is, (\ref{pp34.3}) holds true.
Hence, we obtain from (\ref{pp34.2}) that
\begin{equation}
\label{pp34.3a}
\widetilde{\E}\Big( 
\big| \log(\wty_{t+1}+1) \,-\, \log(\wty_{t+1}'+1) \big| \Big| \wty_t,\wty_t' \Big)
\,\leq\, \Big| \log(\wtl_{t+1}) \,-\, \log(\wtl_{t+1}') \Big|. 
\end{equation}
Recall that we have, by construction, $\wtz_{t+1}=\wtz_{t+1}'$. Using this and the above calculations we obtain
\begin{eqnarray}
\label{pp34.4}
\lefteqn{ \widetilde{\E}\Big( \Delta\big( \widetilde{V}_{t+1}, \widetilde{V}_{t+1}' \big) \Big|
\widetilde{V}_t, \widetilde{V}_t' \Big) } \nonumber \\
& \leq & \kappa_1 \big| \log(\wtl_{t+1}) \,-\, \log(\wtl_{t+1}') \big|
\,+\, \kappa_2 \widetilde{\E}\Big( 
\big| \log(\wty_{t+1}+1) \,-\, \log(\wty_{t+1}'+1) \big| \Big| \widetilde{V}_t,\widetilde{V}_t' \Big) \nonumber \\
& \leq & \kappa \big| \log(\wtl_{t+1}) \,-\, \log(\wtl_{t+1}') \big| \nonumber \\
& \leq & \kappa \Big( |a| \big| \log(\wtl_t) \,-\, \log(\wtl_t') \big| 
\,+\, |b| \big| \log(\wty_t+1) \,-\, \log(\wty_t'+1) \big| \Big) \nonumber \\
& \leq & \kappa \Delta\big( \widetilde{V}_t, \widetilde{V}_t' \big).
\end{eqnarray}
It remains to translate this contraction property for random variables into
a contraction property for the corresponding distributions.
For the metric $\Delta$ on~$S$, we define
\begin{displaymath}
{\mathcal P}(S) \,=\, \big\{Q\colon\quad Q \mbox{ is a probability distribution on } S
\mbox{ with } \int \Delta(z_0,z)\, dQ(z)<\infty \big\},
\end{displaymath}
where $z_0\in S$ is arbitrary.
For two probability measures~$Q,Q'\in {\mathcal P}(S)$, we define the Kantorovich distance
based on the metric $\Delta$ (also known as Wasserstein $L^1$ distance)
by
\bd
{\mathcal K}(Q, Q') \,:=\, \inf_{V\sim Q, V'\sim Q'} \widetilde{\E} \Delta(V, V'),
\ed
where the infimum is taken over all random variables~$V$ and~$V'$ defined on a common
probability space $(\widetilde{\Omega},\widetilde{\F},\widetilde{P})$ with respective laws~$Q$ and~$Q'$.
We denote the Markov kernel of the processes $(V_t)_{t\in\N_0}$ by $\pi^V$.
Now we obtain immediately from (\ref{pp34.4}) that
\begin{equation}
\label{pp34.5}
{\mathcal K}(Q \pi^V, Q' \pi^V) \,\leq\, \kappa \, {\mathcal K}(Q, Q').
\end{equation}
The space ${\mathcal P}(S)$ equipped with the Kantorovich metric ${\mathcal K}$ is complete.
Since by (\ref{pp34.5}) the mapping $\pi^V$ is contractive it follows by the Banach
fixed point theorem that the Markov kernel $\pi^V$ admits a unique fixed point $Q^V$,
i.e. $Q^V\pi^V=Q^V$. In other words, $Q^V$ is the unique stationary distribution of the process
$(V_t)_{t\in\N_0}$. Therefore, the process $((Y_t,\lambda_t,Z_t))_{t\in\N_0}$ has a unique stationary
distribution as well.

\item[(ii)]

	In this case, we do not use Theorem~\ref{T2.1} to prove absolute regularity, but Proposition~\ref{p.mixing}.
	To this end, we make use of a contraction property on the logarithmic scale and
	change over to the square root scale afterwards.
	As above, we construct on a suitable probability space $(\widetilde{\Omega},\widetilde{\F},\widetilde{\P})$
	two versions of the three-dimensional process, $((\wty_t,\wtl_t,\wtz_t))_{t\in\N_0}$ and
	$((\wty_t',\wtl_t',\wtz_t'))_{t\in\N_0}$ where these two processes evolve independently up to time~$k$.
	Then $\wtl_{k+1}$ and $\wtl_{k+1}'$ are independent, as required.
	For $t=k+1,\ldots,k+n-1$, we couple these processes such that $\wtz_t=\wtz_t'$
	as well as $\wty_t\geq\wty_t'$ if $\wtl_t\geq\wtl_t'$ and vice versa $\wty_t\leq\wty_t'$ if $\wtl_t\leq\wtl_t'$.

We obtain from (\ref{pp34.3a}) that
\begin{eqnarray*}
\lefteqn{ \widetilde{\E} \Big( |\log(\wtl_{t+1}) \,-\, \log(\wtl_{t+1}')| \big| \wtl_t,\wtl_t' \Big) } \\
& \leq & a \; \big| \log(\wtl_t) \,-\, \log(\wtl_t') \big|
\,+\, b \; \widetilde{\E}\Big( | \log(\wty_t+1) \,-\, \log(\wty_{t+1}'+1) | \big| \wtl_t,\wtl_t' \Big) \\
& \leq & (a+b) \; \big| \log(\wtl_t) \,-\, \log(\wtl_t') \big|
\end{eqnarray*}
holds for all $t\in\{k+1,\ldots\}$. Using this inequality $(n-1)$-times we obtain that
	\be
	\label{pp34.11}
	\E\left( |\log(\wtl_{k+n}) - \log(\wtl_{k+n}')| \Big| \wtl_{k+1},\wtl_{k+1}' \right)
	\,\leq\, (|a|+|b|)^{n-1} \; \left|\log(\wtl_{k+1}) - \log(\wtl_{k+1}')\right|.
\end{equation}

For $t=k+n,k+n+1,\ldots$, we use a maximal coupling of the count variables, that is,
\bd
\widetilde{\P}\left( \wty_t\neq\wty_t' \Big| \wtl_t, \wtl_t' \right)
\,=\, d_{TV}\left( \mbox{Pois}(\wtl_t), \mbox{Pois}(\wtl_t') \right).
\ed
This implies by Proposition~\ref{p.mixing} that 
\bea
\label{pp34.5a}
\lefteqn{ \beta^X(k,n) } \nonumber \\
& = & \widetilde{\P}\left( \wty_{k+n}\neq \wty_{k+n}' \Big| \wtl_{k+1}, \wtl_{k+1}' \right) \nonumber \\
& & {} \,+\,
 \sum_{r=1}^\infty \widetilde{\P}\left( \wty_{k+n+r}\neq \wty_{k+n+r}',
\wty_{k+n+r-1}=\wty_{k+n+r-1}',\ldots, \wty_{k+n}=\wty_{k+n}' \Big| \wtl_{k+1}, \wtl_{k+1}' \right)
\nonumber \\
& = & \sum_{r=0}^\infty \widetilde{\E}\left( d_{TV}\left( \mbox{Pois}(\wtl_{k+n+r}), \mbox{Pois}(\wtl_{k+n+r}') \right)
\Big| \wtl_{k+1}, \wtl_{k+1}' \right).
\eea
Finally, it remains to make the transition from our estimates of $|\log(\wtl_t)-\log(\wtl_t')|$
to the above total variation distances.
Since $x\mapsto e^{x/2}$ is a convex function we have, for $0\leq x\leq y$,
$|e^{x/2}-e^{y/2}|=\int_{x/2}^{y/2} e^{u/2}/2\, du \leq \frac{e^{x/2}+e^{y/2}}{8} |x-y|$, which implies that
\be
\label{pp34.6}
\left| \sqrt{\wtl_{k+n+r}} \,-\, \sqrt{\wtl_{k+n+r}'} \right|
\,\leq\, \frac{\sqrt{\wtl_{k+n+r}}+\sqrt{\wtl_{k+n+r}'}}{8}\; \left| \log(\wtl_{k+n+r}) - \log(\wtl_{k+n+r}') \right|.
\end{equation}
Using this and the estimate
$d_{TV}(\mbox{Pois}(\lambda),\mbox{Pois}(\lambda'))\leq \sqrt{2/e}|\sqrt{\lambda}-\sqrt{\lambda'}|$ we obtain
\bea
\label{pp34.7}
\widetilde{\P}\left( \wty_{k+n}\neq \wty_{k+n}' \right)
& = & \widetilde{\E} \left[ d_{TV}\left( \mbox{Pois}(\wtl_{k+n}), \mbox{Pois}(\wtl_{k+n}') \right) \right]
\nonumber \\
& \leq & \sqrt{ \frac{2}{e} } \; \widetilde{\E}\left[ \left| \sqrt{\wtl_{k+n}} \,-\, \sqrt{\wtl_{k+n}'} \right|
\right] \nonumber \\
& \leq & \sqrt{ \frac{2}{e} } \; \sqrt{ \widetilde{\E} \left( 
\left( \sqrt{\wtl_{k+n}}+\sqrt{\wtl_{k+n}'} \right)/8 \right)^2 } \; 
\sqrt{ \widetilde{\E} \left( \log(\wtl_{k+n}) \,-\, \log(\wtl_{k+n}') \right)^2 } \nonumber \\
& \leq & \sqrt{ \frac{1}{2e} } \; \sqrt{ \E[ \lambda_{k+n}^2 ] }
\; (|a|+|b|)^{n-1} \; \sqrt{ \widetilde{\E} \left( \log(\wtl_{k+1}) - \log(\wtl_{k+1}') \right)^2 }
\eea
and, analogously,
\bea
\label{pp34.8}
\lefteqn{ \widetilde{\P}\left( \wty_{k+n+r}\neq \wty_{k+n+r}',
\wty_{k+n+r-1}=\wty_{k+n+r-1}',\ldots, \wty_{k+n}=\wty_{k+n}' \Big| \wtl_{k+1}, \wtl_{k+1}' \right) }
\nonumber \\
& \leq & \sqrt{ \frac{1}{2e} } \; \sqrt{ \E[ \lambda_{k+n}^2 ] }
\; (|a|+|b|)^{n-1} \; |a|^l \; \sqrt{ \widetilde{\E} \left( \log(\wtl_{k+1}) - \log(\wtl_{k+1}') \right)^2 }.
\eea
It remains to show that $\E[\lambda_{k+n}^2]$ is bounded. If $Y\sim\mbox{Pois}(\lambda)$, then
$E[(Y+1)^2]=\lambda^2+3\lambda+1$.
This implies 
\begin{displaymath}
\E\big( \lambda_{t+1}^2 \big| \lambda_t \big)
\,=\, e^{2d}\; \E[e^{2Z_0}]\; \lambda_t^{2a}\, [(\lambda_t+2)(\lambda_t+1)]^b
\,\leq\, C_1\Big( \lambda_t^{2(a+b)} \,+\, 1 \Big),
\end{displaymath}
for some $C_1<\infty$.
Therefore we obtain that
\begin{displaymath}
\E\big( \lambda_{t+1}^2 \big| \lambda_t \big)
\,\leq\, C_0 \lambda_t^2 \,+\, C_2,
\end{displaymath}
for appropriate $C_0<1$ and $C_2<\infty$. From this recursion we conclude that
$\E[\lambda_{k+n}^2]$ is bounded.
(\ref{pp34.7}) and (\ref{pp34.8}) yield  that
\bd
\sup\{\beta^X(k,n)\colon\; k\in\N_0\}
\,=\, O\left( (|a|+|b|)^{n-1} \; \sum_{r=0}^\infty |a|^r \right) \,=\, O\left( (|a|+|b|)^n \right).
\ed
\end{itemize}
\end{proof} 
\bigskip

\begin{proof}[Proof of Proposition~\ref{t1.ex1}]
First, note that the contraction condition $a\in(0,1)$ assures existence of a strictly stationary version
of the process with $\beta$-mixing coefficients tending to zero at a geometric rate (see Corollary~\ref{C3.1} and Theorem~2.1 in \citet{Neu11}).
(Alternatively, since we are in the stationary case, Theorem~3.1 in Neumann (2011) containing both results.)
Moreover, all moments of $Y_t$ are finite, see e.g.~\citet[Example 4.1.6]{W18}.
Asymptotic normality of $\widehat \theta_1$ can be deduced from Application~1 in \citet{R95} setting $a_{i,n}=w_i$ and $\xi_i=Y_i-EY_i$
if $\sigma^2=\lim_{n\to\infty} \var(\widehat\theta_1)>0$.   To this end, note that
	from $\sum_{t=1}^n w_t=0$ and stationarity, we get
	$$
	\widehat\theta_1=\sum_{t=1}^n w_t (Y_t-EY_t).
	$$ 
	Additionally, straight-forward calculations yield
	$$
	s_n:=\frac{1}{n^3}\sum_{t=1}^n\left(t-\frac{n+1}{2}\right)^2=\frac{1}{12} +o(1).
	$$ 
	From \citet[Example 4.1.6]{W18} we know that 
	$$
	\cov(Y_0,Y_h)=a^{h}\, \frac{b_0}{(1-a)^2(1+a)}
	$$
	which gives
\begin{eqnarray*}	
\lefteqn{	\sigma^2\cdot \frac{(1-a)^2(1+a)}{b_0} } \\
& = & 1 \;+\; \lim_{n\to\infty}\,\frac{2}{s_n\,n^3}\, \sum_{t=2 }^n\left(t-\frac{n+1}{2}\right)\,a^t\,\sum_{s=1}^{t-1}\left(s-\frac{n+1}{2}\right)\,a^{-s} \\
& = & 1 \;+\; \lim_{n\to\infty}\,\frac{2}{s_n\,n^3}\, \sum_{t=2 }^n \left(t-\frac{n+1}{2}\right)\,
a^t\,\left[\frac{(t-1)a^{-(t+1)}-ta^{-t}+a^{-1}}{(a^{-1}-1)^2}-\frac{n+1}{2}\frac{a^{-t}-a^{-1}}{a^{-1}-1}\right] \\
&	= & 1 \;+\; \lim_{n\to\infty}\,\frac{2a}{s_n\,n^3(1-a)}\, \sum_{t=2 }^n \left(t-\frac{n+1}{2}\right)^2 \\
& = & 1 \;+\; \frac{2a}{1-a}
\end{eqnarray*}
	and finally yields the desired result.
\end{proof}
\medskip

\begin{proof}[Proof of Proposition~\ref{t2.ex1}]
We split up
\begin{equation}
\label{consist.1}
\widehat\theta_1=\sum_{t=1}^n w_t (Y_t-EY_t)+\sum_{t=1}^n w_t\, EY_t. 
\end{equation}
First, note that the second sum tends to infinity.  To see this, rewrite
$$
E Y_t=aEY_{t-1}+b_0+ b_1 t=\cdots=a^{t}EY_0+\sum_{k=0}^{t-1} a^k (b_0+b_1(t-k)).
$$
As $\sqrt{\sum_{s=1}^n (s-\frac{n+1}{2})^2}\geq C_1 n^{3/2} $, we obtain $\sup |w_t|\leq C_2n^{-1/2}$ which implies
\begin{eqnarray*}
\sum_{t=1}^n{w_t}\,EY_t
& = & o(n) \;+\; b_1\,\sum_{t=1}^n{w_t}\sum_{k=0}^{t-1}a^k \,t \\
& = & o(n) \;+\; b_1\,\sum_{t=1}^n w_t\, t\,\frac{a^t-1}{a-1} \\
& = & o(n) \;+\; \frac{b}{1-a}\sum_{t=1}^n t\,w_t \\
& =& C_3\, n^{3/2} \;+\; o(n^{3/2}),
\end{eqnarray*}
for some positive, finite constants $C_1,\; C_2,\; C_3$. 
It remains to show that the first sum in  \eqref{consist.1} is $o_P(n^{3/2})$. To this end, we consider
\begin{displaymath}
E\left[\sum_{t=1}^n w_t (Y_t-EY_t)\right]^2
\,\leq\, \frac{1}{ n} \sum_{h=-(n-1)}^{n-1}\sqrt{\beta^X(|h|)}\,\sum_{s=\max\{1,1-h\}}^{\min\{n,n-h\}}\, \sqrt[4]{E(Y_{s+h}-EY_{s+h})^4\,E(Y_s-EY_s)^4}
\end{displaymath}
applying the covariance inequality for $\alpha$-mixing processes in \cite{Dou94}, Theorem 3 (1), or Theorem 1.1 in \cite{Rio17},
the fact that the $\alpha$-mixing coefficients can be bounded from above by the corresponding $\beta$-mixing coefficients and Corollary \ref{C3.2}.
Recall that the $2^\text{nd}$ and the $3^\text{rd}$ central moment of a Pois($\lambda$) distributed random variable is just~$\lambda$
while the fourth central moment is $\lambda^2+3\lambda$. Using the binomial theorem and $E\lambda_s^2=O(s^2)$, we can further bound
\begin{eqnarray*}
\lefteqn{ E(Y_s-EY_s)^4 } \\
& = & E[\lambda_s^2+3\lambda_s]+4E[\lambda_s(\lambda_s-EY_s)]+6E[\lambda_s(\lambda_s-EY_s)^2]+E(\lambda_s-EY_s)^4 \\
& = & E[\lambda_s^2+3\lambda_s]+4aE[\lambda_s(Y_{s-1} -EY_{s-1})]+6a^2E[\lambda_s(Y_{s-1} -EY_{s-1})^2]+a^4E(Y_{s-1} -EY_{s-1})^4 \\
& = & E[\lambda_s^2+3\lambda_s]+4a^2E(Y_{s-1} -EY_{s-1})^2+6a^2E[\lambda_s(Y_{s-1} -EY_{s-1})^2]+a^4E(Y_{s-1} -EY_{s-1})^4 \\
& = & E[\lambda_s^2+3\lambda_s]+a^2[4+6E\lambda_s]E(Y_{s-1} -EY_{s-1})^2+6a^3E(Y_{s-1} -EY_{s-1})^3+a^4E(Y_{s-1} -EY_{s-1})^4 \\
& \leq & \widetilde Cs^2+a^2\,\widetilde Cs\,E(Y_{s-1} -EY_{s-1})^2+6a^3E(Y_{s-1} -EY_{s-1})^3+a^4E(Y_{s-1} -EY_{s-1})^4 \\
& \leq & \bar C s^2+6a^3E(Y_{s-1} -EY_{s-1})^3+a^4E(Y_{s-1} -EY_{s-1})^4 \\
& \leq & \bar C s^2+6a^3[C's^2+aE(Y_{s-2} -EY_{s-2})^3]+a^4E(Y_{s-1} -EY_{s-1})^4 \\
& \leq & C''s^2+a^4E(Y_{s-1} -EY_{s-1})^4.
\end{eqnarray*}
Iterating these calculations yields that $E(Y_s-EY_s)^4=O(s^2)$ which concludes the proof.
\end{proof}

\begin{proof}[Proof of Lemma~\ref{l.ols}]
Rewrite $Y_t=a Y_{t-1}+b_0+\eta_t$ with $\eta_t=Y_t-\lambda_t,~t=1,\dots,n,$. Using the corresponding matrix notation
and the definition of $X$, we have to show that $(X^TX)^{-1}X^T\eta=o_P(1)$, where $\eta=(\eta_1,\dots, \eta_n)^T$.
We proceed in two steps. First, we show that $NX^T\eta=o_P(1)$ with $N=diag(n^{-1 }, n^{- 1}, n^{-2 })$. Second, we show that $(NX^TX)^{-1}=O_P(1).$\\
For the first part, straight forward calculations show that
$$
NX^T\eta=	\frac{1}{n} \sum_{t=1}^n\begin{pmatrix}
Y_{t-1}(Y_t-\lambda_t)\\
	Y_t-\lambda_t\\
	 (Y_t-\lambda_t) t/n
\end{pmatrix}=o_P(1).
$$	
For the second part, we rewrite
$(NX^TX)^{-1}= M\, (NX^TXM)^{-1}$ with $M=diag(1, 1,n^{-1})$ and show that $NX^TXM$ converges stochastically to an invertible matrix. To this end, note that
\begin{eqnarray*}
NX^TXM
& = & \frac{1}{n}\,\begin{pmatrix}
\sum_{t=0}^{n-1} Y_t^2 & \sum_{t=0}^{n-1} Y_t&n^{-1}\sum_{t=0}^{n-1} t\,Y_t \\
\sum_{t=0}^{n-1}   Y_t&n&  (n+1)/2\\
n^{-1}\sum_{t=0}^{n-1} t\,Y_t&   (n+1)/2& (n+1)(2n+1)/(6n)
\end{pmatrix} \\
& = &
 \,\begin{pmatrix}
EY_0^2 & EY_0&EY_0/2 \\
EY_0&1&  1/2\\
EY_0/2&   1/2& 1/3
\end{pmatrix} \,+\, o_P(1).
\end{eqnarray*}
due to the exponentially decaying autocovariance function  of $(Y_t)_t$. Finally, straight forward calculations
show that the determinant of the remaining matrix is positive which concludes the proof.
\end{proof}

\

\paragraph{\bf Acknowledgment. }
This work was funded by CY Initiative of Excellence (grant ``Investissements d'Avenir'' ANR-16-IDEX-0008) 
Project ``EcoDep'' PSI-AAP2020-0000000013 (first and third authors) and within the MME-DII center of excellence (ANR-11-LABEX-0023-01),
and the Friedrich Schiller University in Jena (for the first author). We thank two anonymous referees
for their valuable comments that led to a significant improvement of the paper.

\bibliographystyle{harvard}

\end{document}